\newtheorem{theorem}{\bf Theorem}
\newtheorem{proposition}{\bf Proposition}
\newtheorem{corollary}{\bf Corollary}
\newtheorem{lemma}{\bf Lemma}
\theoremstyle{definition}
\newtheorem{remark}{\bf Remark}
\numberwithin{equation}{section}
\theparentequation\alph{equation}}
\begin{document}

\title[The Ricci tensor of a gradient Ricci soliton with harmonic Weyl tensor]{The Ricci tensor of a gradient Ricci soliton with harmonic Weyl tensor}

\author[V. Borges]{V. Borges$^1$}
\author[M. A. R. M. Horácio]{M. A. R. M. Horácio$^2$}
\author[J. P. dos Santos]{J. P. dos Santos$^3$}

\address{$^1$Institute of Exact and Natural Sciences, Universidade Federal do Pará, Belém, Pará, Brazil}

\address{$^{2,3}$Mathematics Department, Universidade de Brasília, Brasília, Distrito Federal, Brazil}

\email{$^1$valterborges@ufpa.br}
\email{$^2$m.a.r.m.horacio@mat.unb.br}
\email{$^3$joaopsantos@unb.br}

\keywords{Ricci solitons, Harmonic Weyl tensor, Multiply warped products.}
\subjclass[2020]{
53C20, 
 53E20, 
 53C21, 
 53C25, 
}
\maketitle{}

\begin{abstract}
In this article, we give a new proof of a result due to J. Kim, which states that the Ricci tensor of a gradient Ricci soliton with dimension $n \geq 4$ and harmonic Weyl tensor has at most three distinct eigenvalues. This result constitutes an essential step in the classification of such manifolds, originally established by J. Kim in dimension $4$ and subsequently extended to dimensions $n\geq5$. Our proof offers two notable advantages: it is shorter and does not require the use of any specialized moving frame.
\end{abstract}
\maketitle

\section{Introduction and main results}
 
\hspace{.5cm}
A gradient Ricci soliton $(M, g, f, \lambda)$ is a four-tuple composed by a Riemannian manifold $(M^{n},g)$, with $n\geq3$, a smooth function $f\in C^{\infty}(M)$ and a constant $\lambda\in\mathbb{R}$ satisfying
\begin{align}\label{fundeq}
	\mathrm{Ric} +  \nabla^2 f= \lambda g.
\end{align}

These metrics have gained prominence due to their deep connection with the Ricci flow. Gradient Ricci solitons not only provide special self-similar solutions to the flow, but also frequently arise as singularity models, appearing as limits of dilations near singularities. From a dynamical perspective, they may be regarded as fixed points of the flow, modulo rescaling. Consequently, they play a central role in revealing the local and asymptotic geometry of solutions to the Ricci flow. As Catino and Mastrolia remark \cite{catinoBook}, substantial progress has been made in classifying such manifolds in dimension three, particularly with the shrinking case being completely resolved and with significant advances for the steady and expanding cases. Despite these results, a comprehensive understanding, even in three dimensions, remains elusive. This challenge becomes even more pronounced in higher dimensions, providing strong motivation for continued investigation. It is then natural to seek classifications of gradient Ricci solitons under additional curvature conditions that may enforce rigidity.

In this direction, the Einstein condition serves as a compelling first step: taking $f$ constant produces the trivial solitons $\mathrm{Ric}=\lambda g$. It is well known that Einstein manifolds have constant sectional curvature if either $n=3$, or $n\geq4$ and the Weyl tensor vanishes; the latter being equivalent to local conformal flatness. Complete gradient Ricci solitons have been classified under the assumptions that either $n=3$ and $\lambda>0$, or $n\geq4$, local conformal flatness, and $\lambda\geq0$ (see \cite{cachen,caoqian,cccmm} and references therein). Moreover, every Einstein manifold has harmonic Weyl curvature, since the Cotton tensor vanishes in the Einstein case. In this context, manifolds with harmonic Weyl curvature arise as natural generalizations of both the Einstein condition and local conformal flatness. The classification of complete gradient Ricci solitons with harmonic Weyl tensor and $\lambda>0$ was obtained by combining the results of \cite{fega} and \cite{muse}. When $\lambda=0$ and $n=4$, these solitons were classified in \cite{kim1}. More recently, the same author achieved a classification in \cite{kim3} for $\lambda\geq0$ and $n\geq5$, recovering the results of \cite{fega} and \cite{muse} for $\lambda > 0$, and also providing a local representation as a multiply warped product for any constant $\lambda\in\mathbb{R}$ and $n\geq4$. These manifolds were further studied in \cite{feng}, where a classification was obtained under additional hypotheses.

A central aspect of Kim’s work \cite{kim3} is the bound on the number of distinct eigenvalues of the Ricci tensor of a gradient Ricci soliton with harmonic Weyl curvature: there can be at most three distinct eigenvalues at each point; more precisely, using a basis of eigenvectors \(\{E_i\}_{i\ge1}\), with $E_1=\nabla f/\vert\nabla f\vert$, the eigenvalues \(\{\lambda_i\}_{i\ge2}\) take at most two distinct values. This was proved for \(n=4\) in \cite{kim1} and extended to \(n\ge5\) in \cite{kim3}. This bound is crucial for several reasons:
\begin{itemize}
	\item The soliton equation and the harmonicity of the Weyl tensor, which are tensorial PDEs, reduce to a system of three ODEs for the potential function and at most two warping data.
 \item It forces the tangent bundle of each level hypersurface of \(f\) to split into at most two totally umbilical distributions, ensuring that \(M\) is locally a multiply warped product with at most two fibers.
 \item By analyzing the local models and using the real analyticity of \(g\) and \(f\) to guarantee smooth transitions between regions, one obtains the global classification by gluing together the local pieces. This approach would become substantially more complicated if four or more eigenvalues were possible.
    
\end{itemize}

Kim took these steps in \cite{kim1} to obtain the classification when $n=4$. As remarked in \cite{kim3}, he used an exhaustive case-by-case analysis of connection components in dimension $4$. It is also mentioned that a reasoning similar to the one used in \cite{kim1} becomes impractical as the dimension increases, because the number of connection coefficients grows significantly. Thus, he introduces in \cite{kim3} a refined frame field $\{F_i\}_{i=1}^n$, where $F_{1}=E_{1}$. This frame is obtained via the parallel transport of an initial ordinary orthonormal eigenframe along the $E_1$ flow, which satisfies $\nabla_{F_1}F_\ell=0$ for $\ell>1$. After a few lengthy computations, this refinement provides just enough control on the Riemannian connection to push his approach through in higher dimensions. Subsequently, assuming at least three distinct $\lambda_i$ for $i\ge 2$, Kim obtains contradictory identities, using an $s$-invariant quantity defined using ODEs which arise naturally in the analysis; thus, more than three Ricci eigenvalues cannot occur. This ensures the metric can be written as a multiply warped product with (at most) two Einstein fibers.

The main goal of this article is to provide a new proof of the estimate on the number of distinct eigenvalues, mentioned above. Namely, we give an alternative and shorter proof of the following theorem:

\begin{theorem}[Kim]\label{maxxnumeig-INTRO}
Let $(M^n,g,f,\lambda)$, $n\geq4$, be a gradient Ricci soliton with harmonic Weyl curvature and nonconstant $f$. Then, the Ricci tensor of $M$ has at most three distinct eigenvalues at each point of $M$.
\end{theorem}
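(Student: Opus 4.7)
The plan is to combine the soliton equation with the harmonicity of the Weyl tensor (equivalent in dimension $n \ge 4$ to the vanishing of the Cotton tensor) to derive a single rigid algebraic identity for the Riemann tensor contracted with $\nabla f$, and then to use this identity in a Ricci eigenframe to obtain the eigenvalue bound, without introducing the specialized parallel frame of Kim's proof. First, I recall the standard gradient Ricci soliton identity $\nabla_l R_{jk} - \nabla_k R_{jl} = R_{kljm}\nabla^m f$, obtained by applying the Ricci identity to $\nabla^2 f = \lambda g - \mathrm{Ric}$. Combining with the vanishing of the Cotton tensor and the soliton identity $\nabla_k R = 2R_{km}\nabla^m f$, one obtains
\[ R_{ijkl}\nabla^l f = \tfrac{1}{n-1}\bigl(R_{jm}\nabla^m f\cdot g_{ik} - R_{im}\nabla^m f\cdot g_{jk}\bigr). \]
Evaluating this with $i = 1$ in a $\nabla f$-adapted frame shows that $\mathrm{Ric}(\nabla f, E_j) = 0$ whenever $E_j \perp \nabla f$, so $\nabla f$ is a Ricci eigenvector with eigenvalue $\mu$; the identity simplifies to $R_{ijkl}\nabla^l f = \frac{\mu}{n-1}(g_{ik}\nabla_j f - g_{jk}\nabla_i f)$.

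Next, I choose an orthonormal Ricci eigenframe $\{E_i\}_{i=1}^n$ with $E_1 = \nabla f/|\nabla f|$ and eigenvalues $\lambda_1 = \mu, \lambda_2, \ldots, \lambda_n$. The identity above immediately gives $R_{1i1j} = \frac{\mu}{n-1}\delta_{ij}$ and $R_{ijk1} = 0$ for $i, j, k \ge 2$, as well as $\nabla_{E_1}E_1 = 0$. Writing the Cotton identity in this frame yields, first, the first-order evolution
\[ E_1(\lambda_k) = \tfrac{\mu|\nabla f|}{n-1} + \tfrac{(\mu - \lambda_k)(\lambda - \lambda_k)}{|\nabla f|} \qquad (k\ge 2) \]
along the $E_1$-flow; second, the identities $E_l(\lambda_j) = (\lambda_j - \lambda_l)\Gamma^l_{jj}$ and $(\lambda_j - \lambda_k)\Gamma^k_{lj} = (\lambda_j - \lambda_l)\Gamma^l_{kj}$ for pairwise distinct $j, k, l \ge 2$, where $\Gamma^c_{ab} := \langle \nabla_{E_a}E_b, E_c\rangle$; and third, the orthogonality $\Gamma^j_{1i} = 0$ whenever $\lambda_i \ne \lambda_j$ with $i, j \ge 2$, so that eigenvectors corresponding to distinct eigenvalues are automatically parallel along $E_1$ -- recovering Kim's parallel frame as a by-product rather than an \emph{a priori} construction.

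Finally, I argue by contradiction. Suppose $\mathrm{Ric}$ has three distinct eigenvalues $\lambda_2, \lambda_3, \lambda_4$ on $E_1^\perp$. Differentiating the ODE for $E_1(\lambda_k)$ along each $E_l$ with $l \in \{2,3,4\}$, commuting via $[E_1, E_l] = -\frac{\lambda - \lambda_l}{|\nabla f|}E_l$ (a consequence of $\nabla_{E_1}E_l = 0$ and $\nabla_{E_l}E_1 = \frac{\lambda - \lambda_l}{|\nabla f|}E_l$), and eliminating the Christoffel coefficients via the algebraic constraints above, I expect to arrive at a relation of the form $(\lambda_2 - \lambda_3)(\lambda_3 - \lambda_4)(\lambda_4 - \lambda_2)\cdot Q = 0$ with $Q$ a nonvanishing geometric quantity, contradicting distinctness. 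The anticipated main obstacle is precisely this algebraic bookkeeping, required to isolate the factor $\prod(\lambda_i - \lambda_j)$ from the many consistency relations produced in Step 2; the proposed approach nonetheless avoids both Kim's specialized parallel frame and the auxiliary $s$-invariant extracted from his ODE system.
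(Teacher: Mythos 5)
Your Steps 1 and 2 are correct and reproduce standard material: the identity $R_{ijkl}\nabla^l f=\frac{1}{n-1}(R_{jm}\nabla^m f\,g_{ik}-R_{im}\nabla^m f\,g_{jk})$ is exactly Lemma \ref{bari} (Fernández-López--García-Río), and the frame identities you list are Derdzi\'nski's (Lemma \ref{derdlemma}) together with $\nabla_{E_a}E_1=\xi_a E_a$. The radial ODE for $E_1(\lambda_k)$ also checks out. The problem is Step 3, which is where the entire content of the theorem lives, and it is not a proof: you state that you ``expect to arrive at'' a relation $(\lambda_2-\lambda_3)(\lambda_3-\lambda_4)(\lambda_4-\lambda_2)\,Q=0$ with $Q$ nonvanishing. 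Two concrete objections. First, the mechanism you propose --- differentiating the radial ODE for $E_1(\lambda_k)$ along tangential directions $E_l$, $l\ge 2$ --- cannot produce new information: $|\nabla f|$, $\lambda_1,\dots,\lambda_n$ and hence every term in that ODE are constant on connected components of the level sets of $f$ (item \eqref{cao2} of Lemma \ref{caochen} and the lemma following Lemma \ref{nablaframe}), so $E_l$ applied to either side gives $0=0$. The genuine obstructions come from the radial ODE system itself (the $s$-dependence), not from tangential differentiation. Second, even granting that some cubic alternating relation could be extracted, the assertion that the cofactor $Q$ is nonvanishing is precisely the delicate point: in the paper's version the analogous object is the quadratic polynomial \eqref{poly_atmst2} whose coefficients $B$, $B'+2\lambda$, $(C-\lambda)B+C'$ can a priori all vanish simultaneously, and excluding that degenerate case occupies most of the proof of Lemma \ref{lem_ply_deg2_}, requiring the identities for $\Delta R$ and $|\mathrm{Ric}|^2$ and several rounds of differentiation along $s$. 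Your sketch gives no argument for nondegeneracy.

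For comparison, the paper's route avoids the Christoffel bookkeeping you anticipate as the ``main obstacle'': it first proves the local multiply warped product decomposition (Theorem \ref{decompwarp}) using adapted coordinates on the integral manifolds of the eigendistributions, which converts the tensorial problem into scalar ODEs in $s$ for $\xi_i=h_i'/h_i$; the eigenvalue bound then follows because the $\xi_i$ are roots of a nontrivial polynomial of degree at most two. If you want to complete your approach you would need to (i) replace tangential differentiation by repeated radial differentiation of the system $\xi_i'=-\xi_i^2+C-\lambda$ together with \eqref{sec38}, and (ii) supply a genuine argument for the nondegeneracy of whatever alternating relation you obtain; at that point you would essentially be reconstructing Lemma \ref{lem_ply_deg2_}.
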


We now outline our proof of Theorem \ref{maxxnumeig-INTRO}, which deviates from Kim’s methods in a few significant aspects. Our approach begins by establishing the local geometric structure of the soliton {\it before} addressing the bound on the number of distinct eigenvalues of the Ricci tensor. Thus, we first obtain its local decomposition as a multiply warped product
\begin{align*}
I \times_{h_1} N_1^{r_1} \times \dots \times_{h_k} N_k^{r_k},    
\end{align*}
with warping functions $h_i$, for $i\in\{1,\ldots,k\}$, in accordance with the requirements of Remark \ref{grmnt_wrpngfnctn}. Then we construct a nonconstant polynomial of degree at most two that has $\xi_{i}=h'_i/h_i$ as roots, showing that $k\leq2$. Combining the last fact with a relation between the $\xi_i$ and the eigenvalues of the Ricci tensor, we obtain the desired result.

A key observation is that neither the local geometric decomposition nor the estimate on $k$ is achieved by resorting to any specialized moving frames. Instead, we exploit the integrability of the distributions generated by the eigenspaces of $\mathrm{Ric}$ to introduce a natural system of local coordinates. Namely, we use the arc-length parameter $s$ along the integral curves of $\nabla f/\vert\nabla f \vert=E_{1}$ and choose coordinates on each integral manifold of the distributions associated to the Ricci tensor. Using these coordinates, we prove the multiply warped product structure (see Lemma \ref{lemmamany}) and construct the polynomial (see Lemma \ref{lem_ply_deg2_}).

\section{Preliminaries}\label{prelim}

\hspace{.5cm}Consider vector fields $X,Y,Z,T\in\mathfrak{X}(M)$.  We will adopt the following convention for the curvature
\begin{align*}
	\mathrm{Rm}(X,Y,Z,T)=\left\langle\nabla_{Y}\nabla_{X}Z-\nabla_{X}\nabla_{Y}Z+\nabla_{[X,Y]}Z,T\right\rangle.
\end{align*}
For the Weyl and Cotton tensors, recall that they are defined, respectively, as
\begin{align}
	W(X,Y,Z,T)=&\mathrm{Rm}(X,Y,Z,T)-\frac{1}{n-2}\left(\left(\mathrm{Ric}-\frac{R}{n}g\right)\varowedge g\right)(X,Y,Z,T)\nonumber\\
    &-\frac{R}{2n(n-1)}(g\varowedge g)(X,Y,Z,T) \nonumber,\\
	C(X, Y, Z) =&\left(\nabla_X \mathrm{Ric}\right)(Y, Z) -  \left(\nabla_Y \mathrm{Ric}\right)(X, Z)\label{cotton}\\
    &-\frac{1}{2(n-1)} \left\{\left(\nabla_X (Rg)\right)(Y, Z) - \left(\nabla_Y (Rg)\right)(X, Z)\right\}.\nonumber
\end{align}
where $\varowedge $ is the Kulkarni-Nomizu product, whose definition is given in \cite[page 47]{besse}, for example.

Recall that a Riemannian manifold is locally conformally flat if and only if either its Weyl tensor vanishes and $n\geq4$, or its Cotton tensor vanishes and $n=3$. Furthermore, the Weyl tensor always vanishes in dimension three, and the Weyl tensor is harmonic if and only if the Cotton tensor vanishes and $n\geq4$.

\subsection{Results for when the Cotton tensor vanishes}

\hspace{.5cm}We start by recalling results from \cite{fega} and \cite{kim1}, useful in the rest of this paper. In the version of Lemma 2.1 below, we emphasize the fact that it is an equivalence.

\begin{lemma}[\cite{fega}]\label{bari}
	Suppose $(M^n, g, f, \lambda)$ is a gradient Ricci soliton. Then $M$ has zero Cotton tensor if, and only if,
	\begin{align*}
		\mathrm{Rm}(\nabla f,X,Y,Z)&=Y\left(\frac{R}{2(n-1)}\right)g(X,Z)-Z\left(\frac{R}{2(n-1)}\right)g(X,Y)\nonumber\\
		&=\frac{1}{n-1}(\mathrm{Ric}(\nabla f,Y)g(X,Z)-\mathrm{Ric}(\nabla f,Z)g(X,Y))
	\end{align*}
	for all vector fields $X,Y,Z \in \mathfrak{X}(M)$. If, in particular, $n\geq4$, the condition above is equivalent to the soliton having harmonic Weyl curvature.
\end{lemma}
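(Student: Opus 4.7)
The plan is to derive the stated curvature identity as a reformulation of the Cotton tensor definition, using two facts available for \emph{every} gradient Ricci soliton: the commuted Ricci identity applied to $\nabla f$, and the contracted-Bianchi relation $\nabla R = 2\,\mathrm{Ric}(\nabla f)$.

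First I would rewrite the soliton equation \eqref{fundeq} as $\nabla^{2}f = \lambda g - \mathrm{Ric}$, so that $(\nabla_{X}\nabla^{2}f)(Y,Z) = -(\nabla_{X}\mathrm{Ric})(Y,Z)$ since $\nabla g = 0$. A direct expansion gives $(\nabla_{X}\nabla^{2}f)(Y,Z) = \langle \nabla_{X}\nabla_{Y}\nabla f - \nabla_{\nabla_{X}Y}\nabla f, Z\rangle$, and antisymmetrizing in $X,Y$ produces the Ricci identity for $\nabla f$. Translating this into the author's curvature convention yields the pointwise identity
$$(\nabla_{X}\mathrm{Ric})(Y,Z)-(\nabla_{Y}\mathrm{Ric})(X,Z) \;=\; \mathrm{Rm}(X,Y,\nabla f, Z).$$
Separately, tracing \eqref{fundeq} and invoking the contracted second Bianchi identity produces $\nabla R = 2\,\mathrm{Ric}(\nabla f)$. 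Neither identity requires any hypothesis on the Cotton tensor.

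Next, reading off the definition \eqref{cotton}, I would rewrite $C$ as
$$C(X,Y,Z) \;=\; (\nabla_{X}\mathrm{Ric})(Y,Z) - (\nabla_{Y}\mathrm{Ric})(X,Z) - \frac{1}{2(n-1)}\bigl[X(R)g(Y,Z) - Y(R)g(X,Z)\bigr],$$
and substitute the displayed identity from the previous step to obtain, for every gradient Ricci soliton,
$$\mathrm{Rm}(X,Y,\nabla f, Z) \;=\; C(X,Y,Z) + \frac{1}{2(n-1)}\bigl[X(R)g(Y,Z) - Y(R)g(X,Z)\bigr].$$
Thus $C \equiv 0$ is equivalent to the vanishing of the right-hand bracket expression. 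Applying the pair-exchange symmetry $\mathrm{Rm}(X,Y,\nabla f, Z) = \mathrm{Rm}(\nabla f, Z, X, Y)$ and relabeling entries converts this into the form $\mathrm{Rm}(\nabla f, X, Y, Z) = Y\bigl(R/(2(n-1))\bigr)g(X,Z) - Z\bigl(R/(2(n-1))\bigr)g(X,Y)$ stated in the lemma. The second equality follows at once from $\nabla R = 2\,\mathrm{Ric}(\nabla f)$, which gives $V\bigl(R/(2(n-1))\bigr) = \mathrm{Ric}(\nabla f, V)/(n-1)$ for any tangent vector $V$.

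For the last clause, the equivalence of $C \equiv 0$ and harmonic Weyl curvature for $n \geq 4$ is the standard consequence of the identity $\mathrm{div}\,W = \frac{n-3}{n-2}\,C$ and is not specific to solitons. The only mildly delicate point in the argument is keeping the sign convention straight when passing from the Ricci identity for $\nabla f$ to $\mathrm{Rm}(X,Y,\nabla f, Z)$ under the author's definition of $\mathrm{Rm}$, and controlling the relabelings when invoking the pair-exchange symmetry; no substantive obstacle arises elsewhere.
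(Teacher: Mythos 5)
Your proposal is correct and follows essentially the same route as the paper: both derive $(\nabla_{X}\mathrm{Ric})(Y,Z)-(\nabla_{Y}\mathrm{Ric})(X,Z)=\mathrm{Rm}(X,Y,\nabla f,Z)$ from the soliton equation, substitute into the definition of $C$, and obtain the second equality from $\nabla R=2\,\mathrm{Ric}(\nabla f)$ (which the paper delegates to the identities in \cite{fega}). The only difference is cosmetic: you spell out the contracted Bianchi step and the $\operatorname{div}W=\frac{n-3}{n-2}C$ relation, which the paper leaves implicit.
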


\begin{proof}
	It is easy to see that on a gradient Ricci soliton we have
	\begin{align*}
		\left(\nabla_X \mathrm{Ric}\right)(Y, Z)
		&=g \left( \nabla_{\nabla_{X} Y } \nabla f - \nabla_{X} \nabla_Y \nabla f, Z    \right)
	\end{align*}
	Consequently,
	\begin{align*}
		\left( \nabla_{X} \mathrm{Ric}  \right) \left( Y, Z \right)-\left( \nabla_{Y} \mathrm{Ric}  \right) \left( X, Z \right) = \mathrm{Rm} \left( X, Y,\nabla f,Z \right).
	\end{align*} 
	Straightforward computations also yield
	\begin{align}
			 \left( \nabla_{X} \left(Rg \right)   \right) \left( Y,Z \right) - \left( \nabla_{Y} \left(Rg\right)   \right) \left( X,Z \right) =X(R) g \left( Y,Z \right) - Y(R) g(X,Z).
	\end{align}
	Therefore, using \eqref{cotton}, the Cotton tensor $C$ vanishes if and only if
	\begin{align*}
			\mathrm{Rm} \left( \nabla f, Z,Y,X \right) = Y \left( \frac{R}{2 \left( n-1 \right) } \right) g(X, Z) - X \left( \frac{R}{2(n-1)}  \right) g \left( Y,Z \right)  .
	\end{align*}
	This last expression is equivalent to the first equation we aimed to prove. The second equality follows directly from identities in \cite[page 462]{fega}.
\end{proof}

\begin{lemma}[Lemma 2.2 of \cite{fega}]\label{caochen}
	Let $(M,g,f,\lambda)$ be a gradient Ricci soliton with zero Cotton tensor and nonconstant $f$. Let $c$ be a regular value of $f$ and $\Sigma_{c}=f^{-1}(c)$ be the level surface of $f$. Then,
	\begin{enumerate}
		\item Where $\nabla f\neq0$, $E_{1}=\frac{\nabla f}{|\nabla f|}$ is an eigenvector of $\mathrm{Ric}$.
		\item\label{cao2} $|\nabla f|$ is constant on a connected component of $\Sigma_{c}$.
		\item\label{cao3} There is a function $s$ locally defined with $s(x)=\int\frac{\ \mathrm{d} f}{|\nabla f|}$, so that $\ \mathrm{d} s=\frac{\ \mathrm{d} f}{|\nabla f|}$ and $E_{1}=\nabla s$.
		\item\label{cao4} $E_{1}E_{1}f=-\mathrm{Ric}(E_{1},E_{1})+\lambda$. In particular, $\lambda_1 = \mathrm{Ric}(E_{1},E_{1})$ is constant on a connected component of $\Sigma_{c}$.
		\item\label{cao5} Near a point in $\Sigma_{c}$, the metric $g$ can be written as
		\begin{align*}
			g=\ \mathrm{d} s^2+\sum_{i,j\geq2}g_{ij}(s,x_{2},\ldots,x_{n}) \ \mathrm{d} x_{i}\otimes  \mathrm{d} x_{j}.
		\end{align*}
		\item $\nabla_{E_{1}}E_{1}=0$.
	\end{enumerate}
\end{lemma}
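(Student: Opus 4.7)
The plan is to establish items (1)--(6) in the stated order, since each one feeds into the next. The central input is Lemma \ref{bari}; only item (1) requires real work, and everything afterwards follows from routine computations once $\nabla f$ is known to be an eigendirection of $\mathrm{Ric}$.

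For item (1), I would combine the standard gradient Ricci soliton identity $\mathrm{Ric}(\nabla f,\cdot)=\tfrac{1}{2}dR$ with the curvature formula of Lemma \ref{bari}. Substituting $X=\nabla f$ in the first identity of Lemma \ref{bari} makes the left-hand side vanish by antisymmetry of $\mathrm{Rm}$ in its first two slots, leaving $Y(R)\,g(\nabla f,Z)=Z(R)\,g(\nabla f,Y)$ for all $Y,Z$. Taking $Z=\nabla f$ and dividing by $|\nabla f|^{2}$ yields $dR=\alpha\,df$ for some smooth function $\alpha$, so $Y(R)=0$ whenever $Y\perp\nabla f$. Consequently $\mathrm{Ric}(\nabla f,Y)=\tfrac{1}{2}Y(R)=0$ for every $Y\perp\nabla f$, which is exactly the claim that $E_{1}=\nabla f/|\nabla f|$ is an eigenvector of $\mathrm{Ric}$ wherever $\nabla f\neq 0$.

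Item (2) is then immediate: for $Y$ tangent to $\Sigma_{c}$ one has $\tfrac{1}{2}Y(|\nabla f|^{2})=\nabla^{2}f(Y,\nabla f)=\lambda g(Y,\nabla f)-\mathrm{Ric}(Y,\nabla f)=0$. Since $|\nabla f|$ thus depends only on $f$, the antiderivative $s=\int df/|\nabla f|$ in (3) is well defined, with $ds=df/|\nabla f|$ and $\nabla s=E_{1}$. For (4), a direct calculation gives $E_{1}E_{1}f=E_{1}(|\nabla f|)=\nabla^{2}f(E_{1},E_{1})=\lambda-\mathrm{Ric}(E_{1},E_{1})$, and constancy of $\lambda_{1}$ on $\Sigma_{c}$ follows from (2). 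For (5), I would propagate local coordinates $(x_{2},\dots,x_{n})$ on $\Sigma_{c}$ along the flow of $E_{1}$, setting $\partial_{s}=E_{1}$, so that $[\partial_{s},\partial_{x_{j}}]=0$. Then $\partial_{s}g(\partial_{s},\partial_{x_{j}})=g(\partial_{s},\nabla_{\partial_{x_{j}}}\partial_{s})=\tfrac{1}{2}\partial_{x_{j}}|E_{1}|^{2}=0$, while $g(\partial_{s},\partial_{x_{j}})$ vanishes on the initial slice $\Sigma_{c}$; hence the cross terms vanish everywhere, giving the stated block form. Finally, item (6) is a consequence of $|\nabla s|=1$: symmetry of the Hessian gives $g(\nabla_{\nabla s}\nabla s,X)=g(\nabla_{X}\nabla s,\nabla s)=\tfrac{1}{2}X(|\nabla s|^{2})=0$ for every $X$, so $\nabla_{E_{1}}E_{1}=0$.

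The main obstacle is item (1), where one must extract from Lemma \ref{bari} the proportionality $dR\parallel df$; the substitution $X=\nabla f$ together with the antisymmetry of $\mathrm{Rm}$ is the most economical route. All subsequent items are algebraic consequences of the soliton equation or a Gauss-lemma-type construction of Fermi coordinates adapted to the flow of $E_{1}$.
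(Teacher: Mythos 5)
Your argument is correct and is essentially the standard proof of this statement (the paper itself gives no proof, quoting Lemma 2.2 of \cite{fega}, which follows Cao--Chen): item (1) via Lemma \ref{bari} with $X=\nabla f$ combined with the identity $dR=2\,\mathrm{Ric}(\nabla f,\cdot)$, and items (2)--(6) by the routine computations you indicate. One small ordering point: in item (5) the expansion of $\partial_{s}g(\partial_{s},\partial_{x_{j}})$ also contains the term $g(\nabla_{\partial_{s}}\partial_{s},\partial_{x_{j}})$, which you silently drop; it vanishes by item (6), and since your proof of (6) uses only $|\nabla s|=1$ there is no circularity, but you should either prove (6) before (5) or note directly that $\nabla_{\nabla s}\nabla s=\tfrac{1}{2}\nabla|\nabla s|^{2}=0$ at that step.
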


It is a well-known fact that a Riemannian manifold $(M^n, g)$, $n\geq4$, has a harmonic Weyl tensor if and only if its Schouten tensor $\mathcal{A}=\mathrm{Ric}-\frac{R}{2(n-1)}g$ is Codazzi. In coordinates, this is equivalent to
\begin{align}
	\nabla_{i}\mathcal{A}_{jk}=\nabla_{j}\mathcal{A}_{ik}.
\end{align}
Let $\mathcal{A}$ be a Codazzi tensor and denote by $E_{\mathcal{A}}(x)$ the number of distinct eigenvalues of $\mathcal{A}$ at $x$. In \cite{derd}, Derdzinski considered the following open dense set
\begin{align}
	M_{\mathcal{A}}=\{x\in M \ |  \ E_{\mathcal{A}}(x)\text{ is constant in a neighborhood of }x\}.
\end{align}
It turns out that in $M_{\mathcal{A}}$ the eigenvalues of $\mathcal{A}$ are well-defined and define smooth functions $\lambda_{1},\ldots,\lambda_{n}:M_{\mathcal{A}}\rightarrow\mathbb{R}$. Furthermore, he proved that in such a set the following is true\\

\begin{lemma}[Derdzi\'{n}ski]\label{derdlemma}
	Let $(M^n,g)$, $n\geq4$, be a Riemannian metric with harmonic Weyl curvature. Let $\{E_{i}\}_{i=1}^{n}$ be a local orthonormal frame such that $\mathrm{Ric}(E_{i},\cdot)=\lambda_{i}g(E_{i},\cdot)$. Then,
	\begin{enumerate}
		\item\label{derd1} For any $i,j,k\geq1$,
		\begin{align*}
			&(\lambda_{j}-\lambda_{k})\left\langle\nabla_{E_{i}}E_{j},E_{k}\right\rangle+\nabla_{E_{i}}(\mathcal{A}(E_{j},E_{k}))=\\
			&(\lambda_{i}-\lambda_{k})\left\langle\nabla_{E_{j}}E_{i},E_{k}\right\rangle+\nabla_{E_{j}}(\mathcal{A}(E_{k},E_{i})).
		\end{align*}
		\item If $k\neq i$ and $k\neq j$, then $(\lambda_{j}-\lambda_{k})\left\langle\nabla_{E_{i}}E_{j},E_{k}\right\rangle=(\lambda_{i}-\lambda_{k})\left\langle\nabla_{E_{j}}E_{i},E_{k}\right\rangle$.
		\item Given distinct eigenfunctions $\lambda$ and $\mu$ of $\mathcal{A}$ and local vector fields $U$ and $V$ such that $A(V)=\lambda V$ and $A(U)=\mu U$ with $|U|=1$, it holds that $V(\mu)=(\mu-\lambda)\left\langle\nabla_{U}U,V\right\rangle$.
		\item Each distribution $D_{\lambda_{i}}$, defined by $D_{\lambda_{i}}(p)=\{v\in T_{p} M \ | \ \mathrm{Ric}(v,\cdot)=\lambda_{i}g(v,\cdot)\}$, is integrable and its leaves are totally umbilical submanifolds of $M$.
	\end{enumerate}
\end{lemma}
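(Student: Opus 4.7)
The plan is to derive all four items from the single algebraic identity that the Schouten tensor $\mathcal{A}$ is Codazzi, $(\nabla_{X}\mathcal{A})(Y,Z)=(\nabla_{Y}\mathcal{A})(X,Z)$, which for $n\geq 4$ is equivalent to harmonic Weyl curvature. The workhorse is the expansion of $(\nabla_{X}\mathcal{A})(Y,Z)$ when $Y,Z$ are orthonormal eigenvectors of $\mathcal{A}$: by Leibniz,
\[(\nabla_{X}\mathcal{A})(Y,Z)=X(\mathcal{A}(Y,Z))-\mathcal{A}(\nabla_{X}Y,Z)-\mathcal{A}(Y,\nabla_{X}Z),\]
and inserting $\mathcal{A}(\,\cdot\,,Z)=\lambda_{Z}\langle\cdot,Z\rangle$, $\mathcal{A}(Y,\,\cdot\,)=\lambda_{Y}\langle Y,\cdot\rangle$, together with $\langle\nabla_{X}Y,Z\rangle+\langle Y,\nabla_{X}Z\rangle=X\langle Y,Z\rangle$, gives
\[(\nabla_{X}\mathcal{A})(Y,Z)=X(\mathcal{A}(Y,Z))+(\lambda_{Y}-\lambda_{Z})\langle\nabla_{X}Y,Z\rangle-\lambda_{Y}X\langle Y,Z\rangle,\]
so whenever $\langle Y,Z\rangle$ is constant the last term disappears.

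With $X=E_{i}$, $Y=E_{j}$, $Z=E_{k}$ the inner product $\langle E_{j},E_{k}\rangle$ is constant, so equating the resulting expression with its $i\leftrightarrow j$ analog (which is Codazzi) immediately yields Item~(1). Item~(2) is then immediate: if $k\neq i,j$ then $\mathcal{A}(E_{j},E_{k})=\mathcal{A}(E_{i},E_{k})=0$, so both directional-derivative terms in Item~(1) vanish. For Item~(3), specialize Codazzi to the triple $(U,V,U)$ with $|U|=1$, $\mathcal{A}(U)=\mu U$, $\mathcal{A}(V)=\lambda V$, $\lambda\neq\mu$; since distinct eigenspaces of a symmetric operator are pointwise orthogonal, $\langle V,U\rangle\equiv 0$. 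Expanding $(\nabla_{U}\mathcal{A})(V,U)=(\nabla_{V}\mathcal{A})(U,U)$, the left-hand side collapses via $\mathcal{A}(V,U)=0$ and $\langle\nabla_{U}V,U\rangle=-\langle V,\nabla_{U}U\rangle$ to $(\mu-\lambda)\langle V,\nabla_{U}U\rangle$, while the right-hand side collapses via $|U|^{2}\equiv 1$, whence $\langle\nabla_{V}U,U\rangle=0$, to $V(\mu)$.

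Item~(4) follows from two distinct permutations of Codazzi applied to a pair $V,W\in D_{\lambda_{i}}$ and a transverse $Z\in D_{\mu}$, $\mu\neq\lambda_{i}$. Because $\langle V,Z\rangle=\langle W,Z\rangle\equiv 0$, the general expansion collapses to $(\nabla_{V}\mathcal{A})(W,Z)=(\lambda_{i}-\mu)\langle\nabla_{V}W,Z\rangle$, and the standard Codazzi swap $(\nabla_{V}\mathcal{A})(W,Z)=(\nabla_{W}\mathcal{A})(V,Z)$ forces $\langle[V,W],Z\rangle=0$, so $D_{\lambda_{i}}$ is Frobenius integrable. For umbilicity apply instead the permutation $(\nabla_{V}\mathcal{A})(W,Z)=(\nabla_{Z}\mathcal{A})(V,W)$, obtained by combining Codazzi with the symmetry $\mathcal{A}(W,Z)=\mathcal{A}(Z,W)$. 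Here $\langle V,W\rangle$ is generally not constant, so that after using $\mathcal{A}(V,W)=\lambda_{i}\langle V,W\rangle$ the $\lambda_{i}Z\langle V,W\rangle$ contributions cancel and the right-hand side reduces to $Z(\lambda_{i})\langle V,W\rangle$. Equating yields
\[\langle\nabla_{V}W,Z\rangle=\frac{Z(\lambda_{i})}{\lambda_{i}-\mu}\langle V,W\rangle,\]
which is precisely the totally umbilical condition for the leaves of $D_{\lambda_{i}}$. No genuine obstacle is expected; the only subtlety is recognizing that umbilicity demands the \emph{second} permutation of Codazzi, so that the derivative of the (nonconstant) metric coefficient $\langle V,W\rangle$ is taken along the transverse direction $Z$, producing $Z(\lambda_{i})$ rather than $V(\lambda_{i})$ or $W(\lambda_{i})$.
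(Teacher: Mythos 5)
The paper does not prove this lemma at all: it is quoted from Derdzi\'nski's work (and can also be found in Besse's \emph{Einstein Manifolds}, \S 16.11), so there is no in-paper argument to compare against. Your proof is correct and is precisely the standard derivation: all four items follow from expanding the Codazzi identity $(\nabla_X\mathcal{A})(Y,Z)=(\nabla_Y\mathcal{A})(X,Z)$ in an eigenframe, and each of your specializations (the triples $(E_i,E_j,E_k)$, $(U,V,U)$, and the two permutations of $(V,W,Z)$ for integrability and umbilicity) checks out. One cosmetic caveat in item (4): since the lemma's $\lambda_i$ are eigenvalues of $\mathrm{Ric}$ while the Codazzi tensor is the Schouten tensor $\mathcal{A}=\mathrm{Ric}-\frac{R}{2(n-1)}g$, the umbilicity factor you obtain is really $Z\bigl(\lambda_i-\frac{R}{2(n-1)}\bigr)/(\lambda_i-\mu)$ rather than $Z(\lambda_i)/(\lambda_i-\mu)$; the difference of eigenvalues in the denominator is unaffected, and the conclusion that the leaves are totally umbilical is of course unchanged.
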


\subsection{Multiply warped products}

\hspace{.5cm}In this section, we collect some formulas for the curvatures of a multiply warped product
\begin{align}\label{MWPS}
	M = B \times_{h_1} N_1^{r_1} \times \cdots \times_{h_k} N_{k}^{r_k}.
\end{align}
Recall that \eqref{MWPS} means that the manifold $B\times N_1^{r_1} \times \cdots \times N_{k}^{r_k}$ is endowed with the Riemannian metric
\begin{align}\label{metricMWPS}
	g=g_{B}+h_1^2 g_{N_1}+\cdots+h_{k}^2 g_{N_k}.
\end{align}
In this context, $B$ is called the base and $N_{1},\ldots,N_{k}$ the fibers of the multiply warped product; each positive smooth function $h_{i}:B\rightarrow\mathbb{R}$, $i\in\{1,\cdots,k\}$, is called the warping function corresponding to the fiber $N_{i}^{r_{i}}$. When $k=1$, this is simply called warped product, and a classical reference on it is \cite{oneill}.

	\begin{remark}[Number of fibers]\label{grmnt_wrpngfnctn}
	We adopt the following conventions, which are similar to those of \cite{brovaz}. Namely,
\begin{enumerate}[label=(\roman*)]
\item any fiber whose warping function is constant is absorbed into the base; and
\item if two warping functions differ by a positive constant factor, we rescale the corresponding fiber metrics and identify the fibers.
\end{enumerate}
Consequently, all warping functions are nonconstant and pairwise nonproportional. The \textbf{number of fibers} is the integer $k$ given by the number of equivalence classes of nonconstant warping functions under $f \sim c\,g$ for $c>0$.
\end{remark}

Metrics such as \eqref{metricMWPS} have been used to give examples of manifolds with additional geometric properties \cite{bishop,brovaz,choui,danwan} and arise naturally in a variety of circumstances \cite{caoqian,cccmm,caozhou,kim1,kim3,feng,shin}.

There are a couple of works describing the geometry of these metrics, and here we refer to \cite{bishop,brovaz,DobUn,oneill}. The next lemma collects the Levi-Civita connection and the Ricci tensor of a multiply warped product in terms of the corresponding ones on the base, the fibers, and quantities associated with the warping functions \cite[Proposition 2.5, Proposition 2.6]{DobUn}.

\begin{lemma}[\cite{DobUn}]\label{warped}
	Let $M = B\times_{h_1} N_1^{r_1} \times \cdots \times_{h_k} N_{k}^{r_k}$ be a multiply warped product, and consider $X, Y, Z \in \mathcal{L}(B)$, $V \in \mathcal{L}(N_i)$ and $W \in \mathcal{L}(N_j)$ lifted vector fields. Then:
	
	\begin{enumerate}
		\item The covariant derivative of $M$ satisfies the following relations
		\begin{align*}
			&\nabla_X Y =\nabla^{B}_X Y ,\\
			&\nabla_X V = \nabla_V X = \frac{X(h_i)}{h_i} V,\\
			&\nabla_W V = 0,\ \ \text{if}\ \ i\neq j,\\
			&\nabla_W V = \nabla^{N_{i}}_W V-h_{i}g_{N_{i}}(W,V)\nabla^{B}h_{i},\ \ \text{if}\ \ i=j.
		\end{align*}
		
		\item The Ricci tensor of $M$ is given by
		\begin{align*}
			&\mathrm{Ric}(X, Y) = \mathrm{Ric}_{B}(X,Y) - \sum_{1 \leq \ell \leq k} \frac{r_{\ell}}{h_{\ell}} \nabla^{2}_{B}h_{\ell}(X, Y),\\
			&\mathrm{Ric}(X, V) = 0,\\
			&\mathrm{Ric}(V, W) = 0, \text{ if } i \neq j,\\
			&\mathrm{Ric}(V, W) = \mathrm{Ric}_{N_i}(V, W) - \left(\frac{\Delta_B h_i}{h_i} + (r_i - 1) \frac{| \nabla_{B} h_i |^2}{h_i^2} \right.\label{RicVWi}\\ &\quad\quad\quad\quad\quad + \left.\sum_{\substack{1 \leq \ell \leq k \\
					\ell \neq i  \\
			}} r_{\ell} \frac{g_{B}(\nabla_{B} h_i, \nabla_{B} h_{\ell})}{h_i h_{\ell}} \right) g(V, W), \text{ if } i = j.\nonumber
		\end{align*}
		
		 \item The scalar curvature of $M$ is given by
		 \begin{equation*}
		 	\begin{aligned}
		 		R = &R_{B} - 2 \sum_{1 \leq i \leq k} r_i \frac{\Delta_{B} h_i}{h_i} + \sum_{1 \leq i \leq k} \frac{R_{N_i}}{h_i^2} - \sum_{1 \leq i \leq k} r_i(r_i - 1) \frac{| \nabla_{B} h_i |^2}{h_i^2}\\&-\sum_{1 \leq i \leq k} \sum_{\substack{1 \leq \ell \leq k \\
		 				\ell \neq i  \\
		 		}} r_i r_{\ell} \frac{g_{B}(\nabla_{B} h_i, \nabla_{B} h_{\ell})}{h_i h_{\ell}}.
		 	\end{aligned}
		 \end{equation*}
	\end{enumerate}
\end{lemma}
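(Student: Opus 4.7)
The plan is to derive the three formulas directly from the metric identity $g = g_B + h_1^2 g_{N_1} + \cdots + h_k^2 g_{N_k}$ by systematic use of the Koszul formula on lifted vector fields, and then contract the resulting connection to obtain the Ricci and scalar curvatures.

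First I would compute the Levi-Civita connection case by case. The key observation is that for $X \in \mathcal{L}(B)$ and $V \in \mathcal{L}(N_i)$, $W \in \mathcal{L}(N_j)$ lifted vector fields, the brackets vanish across factors, so $[X,V]=0$ and $[V,W]=0$ when $i \neq j$, while $[V,W]$ is the lift of the $N_i$-bracket when $i=j$. Plugging into the Koszul formula
\[
2\langle \nabla_X Y, Z \rangle = X\langle Y, Z\rangle + Y\langle X, Z\rangle - Z\langle X, Y\rangle + \langle [X,Y], Z\rangle - \langle [X,Z], Y\rangle - \langle [Y,Z], X\rangle,
\]
and using $\langle V, V'\rangle = h_i^2\, g_{N_i}(V, V')$, which gives $X\langle V, V'\rangle = 2 h_i\, X(h_i)\, g_{N_i}(V, V')$ for horizontal $X$ and vanishes for vertical $X$ on a different fiber, the four formulas follow. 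The horizontal-horizontal case reduces to the Koszul formula on $B$ alone. For $\nabla_X V$, only the terms involving $X(h_i)$ survive, producing $(X(h_i)/h_i)V$; symmetry of the connection and $[X,V]=0$ give the same expression for $\nabla_V X$. For vertical pairs on different fibers, every Koszul term vanishes, giving $\nabla_W V = 0$. For same-fiber vertical pairs, the tangential component reproduces $\nabla^{N_i}_W V$ after the conformal rescaling is accounted for, while a pure horizontal piece of the form $-h_i g_{N_i}(W,V)\nabla^B h_i$ appears from the $-Z\langle V,W\rangle$ term tested against horizontal $Z$.

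Next, I would plug the connection formulas into
\[
R(X,Y)Z = \nabla_X \nabla_Y Z - \nabla_Y \nabla_X Z - \nabla_{[X,Y]}Z,
\]
and trace against an orthonormal frame adapted to the product splitting: a $g_B$-orthonormal frame $\{e_\alpha\}$ on $B$ together with $\{h_i^{-1}\epsilon^i_a\}$ for each fiber $N_i$, where $\{\epsilon^i_a\}$ is $g_{N_i}$-orthonormal. The horizontal-horizontal trace yields $\mathrm{Ric}_B(X,Y)$ together with the Hessian contribution $-\sum_\ell (r_\ell/h_\ell)\nabla^2_B h_\ell(X,Y)$, each Hessian appearing $r_\ell$ times when tracing over the $N_\ell$-directions and the factor $1/h_\ell$ emerging from $\nabla_X V = (X(h_\ell)/h_\ell) V$. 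The mixed trace $\mathrm{Ric}(X,V)$ vanishes because every surviving summand pairs a horizontal one-form against a vertical vector. For vertical pairs on different fibers, the connection formulas already force $\mathrm{Ric}(V,W)=0$. For the same-fiber trace on $N_i$, the fiber-tangential part recovers $\mathrm{Ric}_{N_i}(V,W)$ minus $(r_i-1)(|\nabla_B h_i|^2/h_i^2)\,g(V,W)$ from the commutator terms within $N_i$, while tracing the horizontal directions produces $-(\Delta_B h_i/h_i)\,g(V,W)$ together with the cross terms $-r_\ell\, g_B(\nabla_B h_i, \nabla_B h_\ell)/(h_i h_\ell)\cdot g(V,W)$ for $\ell \neq i$.

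The scalar curvature formula then follows by one further contraction, picking up a factor $h_i^{-2}$ when tracing along $N_i$ because of the rescaling $\langle V,V\rangle = h_i^2\, g_{N_i}(V,V)$; this is what produces the $R_{N_i}/h_i^2$ terms, while the remaining pieces come from contracting each of the Ricci expressions already obtained. The main obstacle I expect is the careful bookkeeping of the cross terms between different fibers: each Ricci identity on $N_i$ inherits contributions from every other warping function through $\nabla_X V = (X(h_\ell)/h_\ell)V$ when the horizontal argument is $\nabla^B h_\ell$, and one must ensure that each such contribution is counted exactly once with the correct multiplicity $r_\ell$. Once the combinatorics are organized fiber by fiber, both the Ricci and scalar curvature formulas drop out of the connection computation without further input.
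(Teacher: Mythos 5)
Your outline is correct: the paper does not prove this lemma at all, but simply quotes it from Dobarro--Ünal \cite{DobUn} (their Propositions 2.5 and 2.6), and the proof there is exactly the computation you describe --- Koszul formula on lifted fields using that $h_i$ and $g_{N_i}(V,W)$ are constant along the complementary factors, followed by tracing the curvature in an adapted orthonormal frame $\{e_\alpha\}\cup\{h_i^{-1}\epsilon_a^i\}$. All the coefficients you indicate (the multiplicity $r_\ell$ of each Hessian term, the $(r_i-1)$ in front of $|\nabla_B h_i|^2/h_i^2$, the cross terms $r_\ell\, g_B(\nabla_B h_i,\nabla_B h_\ell)/(h_ih_\ell)$, and the doubling of $\sum r_i\Delta_B h_i/h_i$ in the scalar curvature) check out against the stated formulas.
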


\section{Local representation as a multiple warped product} \label{cotton_results}
\hspace{.5cm}Let $(M,g,f,\lambda)$ be a gradient Ricci soliton with harmonic Weyl tensor. The goal of this section is to give a new proof of the following result, established by Kim in \cite{kim3}
\begin{theorem}\label{decompwarp_lmbd_ctt}
Any gradient Ricci soliton with harmonic Weyl curvature is locally a multiply warped product $I\times_{h_1} N_1^{r_1}\times\cdots\times_{h_k} N_{k}^{r_k}$ of a one-dimensional base $I$ and $k$ fibers $N_{i}^{r_{i}}$. Furthermore, each fiber $N_{i}^{r_{i}}$ of dimension $r_{i}\geq2$ must be Einstein.
\end{theorem}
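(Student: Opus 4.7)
The plan is to set up coordinates adapted to the Ricci eigenspace distributions on the open dense set where the number of distinct Ricci eigenvalues is locally constant, and then reduce the warped-product claim to a family of ODEs along the $E_1$-direction. Use Lemma \ref{caochen} to write $g = ds^2 + g_s$ in the arc-length coordinate $s$ along $E_1 = \nabla f/|\nabla f|$, where $g_s$ is a family of metrics on level sets $\Sigma_s$. Denote by $\mu_0 = \lambda_1$ the Ricci eigenvalue of $E_1$ and by $\mu_1,\ldots,\mu_k$ the distinct Ricci eigenvalues on $E_1^\perp$, with multiplicities $r_1,\ldots,r_k$. Since harmonic Weyl curvature makes the Schouten tensor Codazzi, Lemma \ref{derdlemma} produces integrable eigenspace distributions $D_{\mu_i}$ with totally umbilical leaves. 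On a reference level set $\Sigma_{s_0}$ choose local coordinates $(x^{(1)},\ldots,x^{(k)})$, with $x^{(i)}\in\mathbb{R}^{r_i}$, so that $\partial_{x^{(i)}}$ spans $D_{\mu_i}|_{\Sigma_{s_0}}$; extend them to a neighborhood by pushing along the $E_1$-flow, so that $[E_1,\partial_{x^{(i)}}]=0$. Orthogonality of the Ricci eigenspaces then makes $g_{s_0}$ block diagonal in these coordinates.

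The core step is verifying that the block-diagonal structure persists for all $s$ and that each block evolves by a conformal factor depending only on $s$. For coordinate fields $V,W$, one has $\partial_s g_s(V,W) = (L_{E_1} g)(V,W) = g(\nabla_V E_1,W) + g(V,\nabla_W E_1)$. Using the soliton equation $\nabla^2 f = \lambda g - \mathrm{Ric}$ together with $E_1 = \nabla f/|\nabla f|$ and $W\perp\nabla f$, a direct computation gives $g(\nabla_V E_1,W) = |\nabla f|^{-1}(\lambda g(V,W) - \mathrm{Ric}(V,W))$. For $V\in D_{\mu_i}$ and $W\in D_{\mu_j}$ with $i\neq j$, orthogonality of eigenspaces kills both $g(V,W)$ and $\mathrm{Ric}(V,W)$, so the off-diagonal blocks of $g_s$ are $s$-constant and hence identically zero. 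For $V,W\in D_{\mu_i}$, the same formula reduces to $\partial_s g_s(V,W) = 2|\nabla f|^{-1}(\lambda-\mu_i)g_s(V,W)$. Since $|\nabla f|$ is constant on each $\Sigma_s$ by Lemma \ref{caochen}, this ODE admits a product-form solution $g_s|_{D_{\mu_i}} = h_i(s)^2 \tilde g_{N_i}(x^{(i)})$ precisely when $\mu_i$ is a function of $s$ alone. Establishing this is where the harmonic Weyl hypothesis enters critically: combining Lemma \ref{bari}, which forces $\mathrm{Rm}(\nabla f,\cdot,\cdot,\cdot)$ to be controlled by $\mathrm{Ric}(\nabla f,\cdot)$, with the Codazzi identities of Lemma \ref{derdlemma}, one rules out transverse dependence of $\mu_i$ and thereby obtains the multiply warped form.

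For the Einstein property, apply Lemma \ref{warped}(2) to the resulting multiply warped product: for $V,W\in L(N_i)$, $\mathrm{Ric}_M(V,W) = \mathrm{Ric}_{N_i}(V,W) - F_i(s) g(V,W)$ with $F_i$ depending only on $s$. Since $\mathrm{Ric}_M|_{D_{\mu_i}} = \mu_i(s) g$, we get $\mathrm{Ric}_{N_i} = (\mu_i + F_i)(s)\, h_i(s)^2 g_{N_i}$; as the left-hand side is independent of $s$, the scalar factor $(\mu_i+F_i)h_i^2$ is a constant, so $N_i$ is Einstein whenever $r_i\geq 2$. Absorbing trivial factors and proportional warping functions as in Remark \ref{grmnt_wrpngfnctn} produces the claimed decomposition. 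The main obstacle is the middle step: ensuring $\mu_i$ depends only on $s$ so that the conformal ODE separates. Neither the soliton equation nor orthogonality alone suffices, and it is precisely here that the harmonic Weyl hypothesis, applied through Lemma \ref{bari} together with the Codazzi calculus of Lemma \ref{derdlemma}, becomes indispensable; once this is in place, the remaining manipulations with Lemma \ref{warped} are essentially mechanical.
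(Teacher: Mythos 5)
Your skeleton matches the paper's: adapted coordinates built from the integrable Ricci eigendistributions, an ODE in $s$ for each metric block, and separation of variables for the Einstein condition. But there is a genuine gap at what you yourself call the middle step, and you have misidentified what needs to be proved there. Granting that each $\mu_i$ depends only on $s$ (which the paper quotes from Kim as a separate lemma), your evolution equation $\partial_s g_s(V,W)=2|\nabla f|^{-1}(\lambda-\mu_i)\,g_s(V,W)$ integrates only to
\begin{equation*}
g_{ab}(s,x^{(1)},\ldots,x^{(k)})=h_i(s)^2\,g_{ab}(s_0,x^{(1)},\ldots,x^{(k)}),\qquad a,b\in[i],
\end{equation*}
and the reference block $g_{ab}(s_0,\cdot)$ may still depend on the transverse fiber coordinates $x^{(j)}$, $j\neq i$. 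That is a twisted product, not a multiply warped product; your claim that the product form holds \emph{precisely when} $\mu_i$ is a function of $s$ alone is therefore false as stated. The missing ingredient is the transverse independence $\partial_\alpha g_{ab}=0$ for $\alpha\notin[i]$, which does not follow from orthogonality, total umbilicity, or the constancy of $\mu_i$ on level sets. The paper obtains it by computing the curvature component $\mathrm{Rm}_{a\alpha 1z}=(\xi_a-\xi_\alpha)\langle\nabla_{\partial_\alpha}\partial_a,\partial_z\rangle$ and equating it, via Lemma \ref{bari} together with $\partial_j R=0$ for $j\geq2$, to zero; since $\xi_a\neq\xi_\alpha$ this forces $\nabla_{\partial_\alpha}\partial_a=0$ and hence $\partial_\alpha g_{ab}=0$. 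Your appeal to ``Lemma \ref{bari} plus the Codazzi identities rule out transverse dependence of $\mu_i$'' targets the eigenvalues rather than the metric coefficients and does not substitute for this computation.

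A secondary point: your argument that the off-diagonal blocks remain zero uses $\mathrm{Ric}(V,W)=0$ for $V,W$ in distinct blocks \emph{for all} $s$, but your coordinate fields are defined by pushing forward along the $E_1$-flow, and you have not shown that they remain in the respective eigendistributions away from $\Sigma_{s_0}$. This can be repaired (it follows from $\nabla_{\partial_a}E_1=\xi_a\partial_a$ and the resulting preservation of the eigenspace decomposition along the flow, or by choosing coordinates on the integral manifolds as the paper does), but as written the step is circular. The Einstein part of your argument is correct and coincides with the paper's.
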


The first step, presented in the next subsection, shows that certain functions depend only on the arc length of the integral curve of $\frac{\nabla f}{\vert\nabla f\vert}$. In the subsequent subsection, we compute certain components of the metric using an appropriate coordinate system, which allows us to conclude the local decomposition as a multiply warped product. This avoids the necessity of using a special moving frame.

\subsection{$R$ and the Eigenvalues of $\mathrm{Ric}$ depend only on $s$}\label{subsec_dep_s}

\hspace{.5cm}In order to prove the local representation of $M$ as a multiple warped product, we first show that using the system of coordinates of item \eqref{cao5} of Lemma \ref{caochen}, certain important functions depend only on $s$, the arc length of the integral curve of $\frac{\nabla f}{\vert\nabla f\vert}$.

First, we deal with the set $\mathcal{R}$ of regular points of the potential function $f$ of the gradient Ricci soliton $M$, that is,
\begin{align*}
	\mathcal{R}=\{x\in M \ | \ \nabla f(x)\neq0\}.
\end{align*}
Since a gradient Ricci soliton is real analytic in harmonic coordinates (\cite{ivey2}), $\mathcal{R}$ is dense in $M$.

For each point $p$ of the open and dense subset $\mathcal{R}\cap M_{\mathcal{A}}$, we will consider the orthonormal frame $\{E_{i}\}_{i=1}^n$ given in Lemma \ref{derdlemma} and recall that $E_{1}=\frac{\nabla f}{|\nabla f|}$. For this frame, we have $\mathrm{Ric}_{ij}=\lambda_{i}\delta_{ij}$. Furthermore,
\begin{lemma}\label{nablaframe}
	Let $(M^n,g,f,\lambda)$, $n\geq4$, be a gradient Ricci soliton with harmonic Weyl curvature and nonconstant $f$. Then,
	\begin{align}\label{step2}
		\nabla_{E_{a}}E_{1}=\xi_{a}E_{a}\ \ \text{and}\ \ \xi_{a}=-\left\langle\nabla_{E_{a}}E_{a},E_{1}\right\rangle, \ \forall a \geq 2,
	\end{align}
	where
	\begin{align}\label{step1}
		\xi_{a}=\frac{\lambda-\lambda_{a}}{|\nabla f|}.
	\end{align}
\end{lemma}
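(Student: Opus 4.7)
My plan is to derive both identities by playing the soliton equation against the simple product rule for $\nabla f=|\nabla f|\,E_{1}$, and then to use the orthogonality $\langle E_{a},E_{1}\rangle=0$ for the second identity.

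First, I would rewrite the soliton equation \eqref{fundeq} as $\nabla^{2}f=\lambda g-\mathrm{Ric}$ and evaluate it on pairs of frame vectors. Since the frame $\{E_{i}\}$ diagonalizes $\mathrm{Ric}$, this gives $\nabla^{2}f(E_{a},E_{b})=(\lambda-\lambda_{a})\delta_{ab}$ for $a,b\ge 2$ and $\nabla^{2}f(E_{a},E_{1})=-\mathrm{Ric}(E_{a},E_{1})=0$ (the latter because Lemma \ref{caochen}(1) gives that $E_{1}$ is itself an eigenvector, so $\mathrm{Ric}(E_{a},E_{1})=\lambda_{1}\langle E_{a},E_{1}\rangle=0$).

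On the other hand, I would compute the same quantities directly from $\nabla f=|\nabla f|E_{1}$:
\begin{align*}
\nabla_{E_{a}}\nabla f=E_{a}(|\nabla f|)\,E_{1}+|\nabla f|\,\nabla_{E_{a}}E_{1}.
\end{align*}
Pairing with $E_{1}$ and using $\langle\nabla_{E_{a}}E_{1},E_{1}\rangle=0$ recovers the known fact $E_{a}(|\nabla f|)=0$ (cf.\ Lemma \ref{caochen}(2)). Pairing with $E_{b}$ for $b\ge 2$ yields
\begin{align*}
|\nabla f|\,\langle\nabla_{E_{a}}E_{1},E_{b}\rangle=(\lambda-\lambda_{a})\delta_{ab},
\end{align*}
so $\langle\nabla_{E_{a}}E_{1},E_{b}\rangle=\xi_{a}\delta_{ab}$ with $\xi_{a}=(\lambda-\lambda_{a})/|\nabla f|$. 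Combined with the $E_{1}$-component vanishing, this gives $\nabla_{E_{a}}E_{1}=\xi_{a}E_{a}$, proving \eqref{step1} and the first half of \eqref{step2}.

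For the second identity in \eqref{step2}, I would simply differentiate $\langle E_{a},E_{1}\rangle=0$ along $E_{a}$:
\begin{align*}
0=E_{a}\langle E_{a},E_{1}\rangle=\langle\nabla_{E_{a}}E_{a},E_{1}\rangle+\langle E_{a},\nabla_{E_{a}}E_{1}\rangle=\langle\nabla_{E_{a}}E_{a},E_{1}\rangle+\xi_{a},
\end{align*}
which immediately gives $\xi_{a}=-\langle\nabla_{E_{a}}E_{a},E_{1}\rangle$. There is no genuine obstacle in this argument; the only point that deserves care is invoking Lemma \ref{caochen}(1) (valid because harmonic Weyl curvature for $n\ge 4$ implies vanishing Cotton tensor via Lemma \ref{bari}) to ensure that $E_{1}$ is an eigenvector of $\mathrm{Ric}$, so that the off-diagonal components $\mathrm{Ric}(E_{a},E_{1})$ vanish and the Hessian is truly diagonal in the chosen frame.
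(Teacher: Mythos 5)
Your proposal is correct and follows essentially the same route as the paper: both proofs combine the soliton equation $\nabla^{2}f=\lambda g-\mathrm{Ric}$ with the eigenframe to compute $\nabla_{E_{a}}E_{1}=\frac{(\lambda-\lambda_{a})}{|\nabla f|}E_{a}$, and then obtain the second identity by metric compatibility from $\langle E_{a},E_{1}\rangle=0$. The only difference is cosmetic: you rederive $E_{a}(|\nabla f|)=0$ by pairing with $E_{1}$, whereas the paper absorbs this step by invoking item (2) of Lemma \ref{caochen}.
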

\begin{proof}
	To prove the first identity of $(\ref{step2})$, notice that for any $a\geq2$ we have
	\begin{align}\label{comp1}
		\begin{split}
			\nabla_{E_{a}}E_{1}=\frac{\nabla_{E_{a}}\nabla f}{|\nabla f|}=\frac{\lambda E_{a}-\mathrm{Ric}(E_{a},\cdot)}{|\nabla f|}=\frac{(\lambda-\lambda_{a})E_{a}}{|\nabla f|}=\xi_{a}E_{a}.
		\end{split}
	\end{align}
	Now we combine it with the equality $\left\langle\nabla_{E_{a}}E_{1},E_{a}\right\rangle=-\left\langle\nabla_{E_{a}}E_{a},E_{1}\right\rangle$ to get the second identity of $(\ref{step2})$.
	
\end{proof}

It follows from Lemma \ref{caochen} that the scalar curvature $R$ and the first Ricci-eigenvalue $\lambda_1$ are constant on each connected component of a regular level set $\Sigma_{c}$ of $f$. The following lemma ensures the same holds for all the remaining eigenvalues $\lambda_i, i \geq 2$. Consequently, these quantities depend solely on the arc-length parameter $s$ (cf. item \eqref{cao5} of Lemma \ref{caochen}). This result was originally established in \cite{kim1} for four-dimensional Ricci solitons, and as Kim remarks in \cite{kim3}, one can easily see that the arguments can be naturally extended to higher dimensions.

\begin{lemma}
	Let $(M^n,g,f,\lambda)$, $n\geq4$, be a gradient Ricci soliton with harmonic Weyl curvature and nonconstant $f$. The functions $\lambda_{2},\ldots,\lambda_{n}$ are constant on each connected component of $\Sigma_{c}$. As a consequence, the functions $\xi_{2},\ldots,\xi_{n}$ are also constant on each connected component of $\Sigma_{c}$.
\end{lemma}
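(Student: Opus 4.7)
The plan is as follows. Lemma \ref{caochen} already ensures that $|\nabla f|$ and $\lambda_1$ are constant on each connected component of $\Sigma_c$. From Lemma \ref{bari}, by comparing the two expressions stated there and equating the coefficient of $g(X,Z)$, we read off the Hamilton-type identity $Y(R)=2\,\mathrm{Ric}(\nabla f,Y)$; since $E_1$ is a Ricci eigenvector with eigenvalue $\lambda_1$, this becomes $\nabla R=2\lambda_1\nabla f$, so $\nabla R$ is normal to $\Sigma_c$ and hence $R$ is constant on each such component. Because $\xi_a=(\lambda-\lambda_a)/|\nabla f|$ by Lemma \ref{nablaframe} and $|\nabla f|$ is constant on $\Sigma_c$, the statement about the $\xi_a$'s is equivalent to the one about the $\lambda_a$'s, and it suffices to prove $E_b(\lambda_a)=0$ for all $a,b\geq 2$.

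The main step is a Derdzi\'nski-type identity obtained from the vanishing of the Cotton tensor together with the formula $\nabla_{E_a}E_1=\xi_a E_a$ of Lemma \ref{nablaframe}. Since $E_a(R)=E_b(R)=0$ for $a,b\geq 2$, the Cotton relation specializes to $(\nabla_{E_b}\mathrm{Ric})(E_a,E_a)=(\nabla_{E_a}\mathrm{Ric})(E_b,E_a)$. Expanding both sides in the eigenframe using $\mathrm{Ric}(E_i,E_j)=\lambda_i\delta_{ij}$ and $\nabla_{E_a}E_1=\xi_a E_a$ yields
\begin{align*}
E_b(\lambda_a)=(\lambda_a-\lambda_b)\,\langle\nabla_{E_a}E_a,E_b\rangle,\qquad a\neq b,\ a,b\geq 2.
\end{align*}
When $\lambda_a=\lambda_b$ the right-hand side vanishes automatically, so $E_b(\lambda_a)=0$; this already handles every direction $E_b$ lying inside the eigendistribution $D_{\lambda_a}$.

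The remaining and more delicate case is $\lambda_a\neq\lambda_b$, where one must show $\langle\nabla_{E_a}E_a,E_b\rangle=0$. Here we follow the strategy of Kim \cite{kim1,kim3}: the main inputs are (i) the curvature identities of Lemma \ref{bari}, which in the eigenframe force $\mathrm{Rm}(E_1,E_a,E_b,E_c)=0$ for all $a,b,c\geq 2$, and (ii) the Derdzi\'nski relations of Lemma \ref{derdlemma}(2), which link the various connection coefficients $\langle\nabla_{E_i}E_j,E_k\rangle$ through the eigenvalue differences. Combining these with the integrability and umbilicity of each $D_{\lambda_a}$ from Lemma \ref{derdlemma}(4) produces enough algebraic constraints to conclude that the connection coefficient vanishes. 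The main obstacle is the case-analysis bookkeeping for the various multiplicities of the $\lambda_a$'s; in dimension four this was carried out exhaustively in \cite{kim1}, and the argument extends with only minor modifications to arbitrary $n\geq 5$, as remarked in \cite{kim3}.
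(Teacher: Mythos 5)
The paper does not actually prove this lemma: it is stated without proof and attributed to \cite{kim1}, together with Kim's remark in \cite{kim3} that the four-dimensional argument extends to higher dimensions. So there is no internal proof to match your proposal against, and the relevant question is whether your proposal is a complete argument on its own. The first two-thirds of it are correct and go beyond what the paper writes down: extracting $Y(R)=2\,\mathrm{Ric}(\nabla f,Y)$ from Lemma \ref{bari}, hence $\nabla R=2\lambda_1\nabla f$ and the constancy of $R$ on connected components of $\Sigma_c$; the equivalence between constancy of $\lambda_a$ and of $\xi_a$ via Lemma \ref{nablaframe} and item (2) of Lemma \ref{caochen}; and the identity $E_b(\lambda_a)=(\lambda_a-\lambda_b)\langle\nabla_{E_a}E_a,E_b\rangle$, which is item (3) of Lemma \ref{derdlemma} applied to the Schouten tensor (legitimate here because $E_b(R)=0$), disposing of all directions $E_b$ with $\lambda_b=\lambda_a$.

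The gap is in the last paragraph. Showing $\langle\nabla_{E_a}E_a,E_b\rangle=0$ when $a,b\geq 2$ lie in distinct eigendistributions is not a routine consequence of the ingredients you list: the vanishing of $\mathrm{Rm}(E_1,E_a,E_b,E_c)$ constrains only curvature components with an $E_1$ slot, Derdzi\'nski's item (2) relates coefficients $\langle\nabla_{E_i}E_j,E_k\rangle$ with three distinct eigenvalue classes, and umbilicity of $D_{\lambda_a}$ only tells you that $\langle\nabla_{E_a}E_a,E_b\rangle=\langle H_a,E_b\rangle$ for the mean curvature vector $H_a$ of the leaf, so you would still have to prove that $H_a$ is parallel to $E_1$ --- which is essentially a restatement of the claim. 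The sentence ``produces enough algebraic constraints to conclude'' is an assertion, not an argument, and this is precisely where Kim's case-by-case analysis of connection coefficients does the real work. Since the paper itself cites \cite{kim1,kim3} for the entire lemma, your deferral of this single step to the same sources is no less rigorous than the paper's treatment; but read as a self-contained proof, the proposal leaves the essential difficulty unproved.
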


\subsection{Local multiply warped product structure}\label{der_ivey_ex}

\hspace{.5cm}In this subsection, we prove that gradient Ricci solitons with zero Cotton tensor are multiply warped products around regular points of the potential function.

Recall that Derdziński's result (cf. Lemma \ref{derdlemma}) ensures the distributions $D_{\lambda_i} = \mathrm{Span}\{E_{\ell} \ \vert \ \ell \in [i] \}$ are integrable. Denote by $M_{i}^{r_i}$ the integral manifolds of $D_{\lambda_i}$. We also adopt the following notation, following \cite{feng}: for each $a\geq2$ we consider the set of indices
\begin{align}
	[a]=\{j\in\{2,\ldots,n\} \ | \ \lambda_{j}=\lambda_{a}\}.
\end{align}
We also adopt the convention that $2\leq a,b,c,\ldots,\alpha,\beta,\gamma,\ldots\leq n$ satisfy $b,c\in[a]$, $\beta,\gamma\in[\alpha]$ and $[a]\neq[\alpha]$.

Once we know the distributions corresponding to the eigenspaces of the Ricci tensor are integrable, we can make use of coordinate systems. This will make some computations simpler than those performed in \cite{kim1,feng,shin}. \\

\begin{lemma}\label{lemmamany}
	Let $(M^n,g,f,\lambda)$, $n\geq4$, be a gradient Ricci soliton with harmonic Weyl curvature and nonconstant $f$. Let $(x_{b})_{b\in[a]}$ and $(x_{\beta})_{\beta\in[\alpha]}$ be local coordinate systems of the integral manifolds $M^{r_a}_{a}$ and $M^{r_\alpha}_{\alpha}$ of the distributions $D_{a}$ and $D_{\alpha}$, respectively. Setting $\partial_{1}=E_{1}$, we have
	\begin{align}\label{37lemma36}
		\partial_{1}g_{ab}=2\xi_{a}g_{ab}\ \ \ \ \text{and}\ \ \ \ \ \partial_\alpha g_{ab}=0.
	\end{align}
\end{lemma}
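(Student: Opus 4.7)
My approach is direct computation, exploiting Lemma~\ref{nablaframe}, Derdziński's identities (Lemma~\ref{derdlemma}), and the $s$-dependence of the Ricci eigenvalues established in Subsection~\ref{subsec_dep_s}. The key preliminary observation is that $\partial_1 = E_1 = \nabla s$ and that, because $(x_b)_{b\in[a]}$ parametrize the $D_{a}$-leaves, the coordinate vector fields $\partial_b$ (with $b\in[a]$) are tangent to these leaves, so $\partial_b \in D_{a}$; analogously $\partial_\beta \in D_{\alpha}$. Since they are coordinate fields, the Lie brackets $[\partial_1, \partial_a]$, $[\partial_\alpha, \partial_a]$ and $[\partial_a, \partial_b]$ all vanish, whence $\nabla_{\partial_i}\partial_j$ is symmetric in $i,j$.

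For the first identity, torsion-freeness yields $\nabla_{\partial_1}\partial_a = \nabla_{\partial_a}E_1$. Lemma~\ref{nablaframe} together with the equality of $\xi_c$ for all $c \in [a]$ (immediate from \eqref{step1} and $\lambda_c = \lambda_a$) shows that $X \mapsto \nabla_X E_1$ acts on $D_a$ as multiplication by $\xi_a$; hence $\nabla_{\partial_a} E_1 = \xi_a \partial_a$ because $\partial_a \in D_a$. Applying Leibniz to $\partial_1 g(\partial_a, \partial_b)$ (noting $\partial_b \in D_a$ as well) then immediately gives $\partial_1 g_{ab} = 2\xi_a g_{ab}$.

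For the second identity, the orthogonality $g(\partial_a, \partial_\alpha) = g(\partial_b, \partial_\alpha) = 0$ combined with $\nabla_{\partial_a}\partial_b = \nabla_{\partial_b}\partial_a$ reduces the Leibniz expansion of $\partial_\alpha g_{ab}$ to $-2\, g(\partial_\alpha, \nabla_{\partial_a}\partial_b)$. The totally umbilical property of the $D_a$-leaves (item~4 of Lemma~\ref{derdlemma}) implies that the component of $\nabla_{\partial_a}\partial_b$ normal to $D_a$ equals $g_{ab}\, H_a$, where $H_a$ is the mean curvature vector of the leaf. Thus it suffices to show that $H_a$ has no component in $D_\alpha$, i.e.\ that $\langle H_a, E_\beta\rangle = \langle \nabla_{E_c}E_c, E_\beta\rangle = 0$ for every $c \in [a]$ and $\beta \in [\alpha]$. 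Applying Derdziński's identity (item~3 of Lemma~\ref{derdlemma}) with $V=E_\beta$ and $U=E_c$ yields
\[
E_\beta(\lambda_a) \;=\; (\lambda_a - \lambda_\alpha)\,\langle \nabla_{E_c}E_c, E_\beta\rangle.
\]
Since $\lambda_a$ depends only on $s$ by Subsection~\ref{subsec_dep_s} and $E_\beta \perp E_1 = \nabla s$, the left-hand side vanishes; the hypothesis $[a]\neq[\alpha]$ (i.e.\ $\lambda_a \neq \lambda_\alpha$) then forces the claimed vanishing.

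The main (mild) obstacle is the geometric reduction of $\partial_\alpha g_{ab}$ to the $D_\alpha$-component of $H_a$ via total umbilicity; once that reduction is in place, the vanishing is immediate from Derdziński's identity combined with the $s$-dependence of the Ricci eigenvalues.
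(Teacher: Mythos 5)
Your proof is correct, and while the first identity is handled essentially as in the paper, the second one follows a genuinely different route. For $\partial_1 g_{ab}=2\xi_a g_{ab}$ you do what the paper does: $\nabla_{\partial_a}\partial_1=\xi_a\partial_a$ from the soliton equation (tensoriality of $X\mapsto\nabla_X E_1$ together with Lemma~\ref{nablaframe} and $\xi_b=\xi_a$ for $b\in[a]$), then Leibniz. For $\partial_\alpha g_{ab}=0$ the paper invokes the curvature identity of Lemma~\ref{bari} (vanishing Cotton tensor): it computes $\mathrm{Rm}_{a\alpha 1z}=(\xi_a-\xi_\alpha)\langle\nabla_{\partial_\alpha}\partial_a,\partial_z\rangle$ and, using $\partial_jR=0$ for $j\ge 2$, obtains the stronger statement $\nabla_{\partial_\alpha}\partial_a=0$, from which the claim is immediate. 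You instead reduce $\partial_\alpha g_{ab}$ to $-2g_{ab}\langle H_a,\partial_\alpha\rangle$ via the total umbilicity of the $D_a$-leaves (item~4 of Lemma~\ref{derdlemma}) and kill the mean-curvature component with Derdzi\'nski's eigenvalue identity (item~3) combined with the $s$-dependence of the eigenvalues. Both routes ultimately exploit the harmonic-Weyl hypothesis through the constancy of $R$ and the $\lambda_i$ on level sets; yours proves exactly what is needed (only the $D_\alpha$-component of the second fundamental form vanishes) and stays entirely within the Codazzi-tensor machinery, whereas the paper's yields the full connection identity $\nabla_{\partial_\alpha}\partial_a=0$. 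Two minor remarks: item~3 of Lemma~\ref{derdlemma} concerns eigenvalues of the Schouten tensor, so the left-hand side of your displayed identity should strictly be $E_\beta\bigl(\lambda_a-\tfrac{R}{2(n-1)}\bigr)$ rather than $E_\beta(\lambda_a)$ --- harmless, since $R$ is also constant on the level sets of $f$; and both your argument and the paper's use that the adapted coordinate fields commute (e.g.\ $[\partial_\alpha,\partial_a]=0$), which you at least make explicit.
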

\begin{proof}
    In order to prove the lemma, we will first show that
	\begin{align}\label{three_id}
	    \nabla_{\partial_{a}}{\partial_{1}}=\xi_{a}\partial_{a},\ \ \text{and}\ \ \ \nabla_{\partial_{\alpha}}\partial_{a}=0.
	\end{align}
        The first equality of \eqref{three_id} follows from considering any vector field $X$ and $a\geq2$, and noticing that
	\begin{align*}
		\left\langle\nabla_{\partial_a}\partial_{1},X\right\rangle=\frac{1}{|\nabla f|}\nabla^2 f(\partial_a,X)=\frac{\lambda-\lambda_{a}}{|\nabla f|}\left\langle\partial_{a},X\right\rangle=\left\langle\xi_{a}\partial_{a},X\right\rangle.
	\end{align*}
        To prove the second equality of \eqref{three_id}, let $a,\alpha,z\in\{2,\ldots,n\}$ are so that $[a]\neq[\alpha]$, then
	\begin{align*}
		\mathrm{Rm}_{a\alpha1z}=&\left\langle\nabla_{\partial_{\alpha}}\nabla_{\partial_{a}}\partial_{1}-\nabla_{\partial_{a}}\nabla_{\partial_{\alpha}}\partial_{1},\partial_{z}\right\rangle\\
		=&\xi_{a}\left\langle\nabla_{\partial_{\alpha}}\partial_{a},\partial_{z}\right\rangle-\xi_{\alpha}\left\langle\nabla_{\partial_{a}}\partial_{\alpha},\partial_{z}\right\rangle\\
		=&(\xi_{a}-\xi_{\alpha})\left\langle\nabla_{\partial_{\alpha}}\partial_{a},\partial_{z}\right\rangle.
	\end{align*}
	On the other hand, by using $(\ref{bari})$ we obtain
	\begin{align*}
		(\xi_{a}-\xi_{\alpha})\left\langle\nabla_{\partial_{\alpha}}\partial_{a},\partial_{z}\right\rangle=\mathrm{Rm}_{1za\alpha}
		=\partial_{a}\left(\frac{R}{2(n-1)}\right)g_{z\alpha}-\partial_{\alpha}\left(\frac{R}{2(n-1)}\right)g_{za}
		=0,
	\end{align*}
	where in the last equality we have used that $\partial_{j}R=0$, for all $j\geq2$. As $[a]\neq[\alpha]$ and $z\in\{1,\ldots,n\}$, we conclude that $\nabla_{\partial_{\alpha}}\partial_{a}=0$.

	In order to finish the proof of the lemma, recall that $\xi_{a}=\xi_{b}$ and consider the following derivatives of $g_{ab}$
	\begin{align*}
		\partial_{1}g_{ab}&=g(\nabla_{\partial_{1}}\partial_{a},\partial_{b})+g(\partial_{a},\nabla_{\partial_{1}}\partial_{b})=2\xi_{a}g_{ab}\\
		\partial_{\alpha}g_{ab}&=g(\nabla_{\partial_{\alpha}}\partial_{a},\partial_{b})+g(\partial_{a},\nabla_{\partial_{\alpha}}\partial_{b})=0.
	\end{align*}
\end{proof}

In what follows, we use Lemma \ref{lemmamany} to prove the local decomposition of the metric as a multiply warped product.

\begin{theorem}\label{decompwarp}
Any gradient Ricci soliton with harmonic Weyl curvature is locally a multiply warped product $I\times_{h_1} N_1^{r_1}\times\cdots\times_{h_k} N_{k}^{r_k}$ of a one-dimensional base $I$ and $k$ fibers $N_{i}^{r_{i}}$. Furthermore, each fiber $N_{i}^{r_{i}}$ of dimension $r_{i}\geq2$ must be Einstein.
\end{theorem}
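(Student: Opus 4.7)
The plan is to build local coordinates adapted to the eigenspace decomposition of the Ricci tensor, use Lemma \ref{lemmamany} to factor the metric, and then read off both the multiply warped product structure and the Einstein property of the fibers.

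First, I would work on a neighborhood of a fixed point in the open dense set $\mathcal{R}\cap M_{\mathcal{A}}$. By Lemma \ref{derdlemma}, each eigenspace distribution $D_{\lambda_i}$ is integrable, and distinct eigenspaces are pairwise orthogonal. Choosing a regular level set $\Sigma_{c}$ of $f$ through the point, I would use (simultaneous) Frobenius on $\Sigma_{c}$ to pick local coordinates $(x^{(i)}_{b})_{b\in[i]}$ for each $D_{\lambda_i}$, then drag them along the $E_{1}$-flow. Combined with Lemma \ref{caochen}(5), this produces coordinates $(s,(x^{(i)}_{b})_{b\in[i],\,1\le i\le k})$ in which $g=\mathrm{d}s^{2}+\sum_{i,j\ge2}g_{ij}\,\mathrm{d}x^{i}\mathrm{d}x^{j}$ and $\partial_{1}=E_{1}$. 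Orthogonality of eigenspaces forces the cross terms $g_{a\alpha}$ ($[a]\neq[\alpha]$) to vanish, so $g$ is block-diagonal: $g=\mathrm{d}s^{2}+\sum_{i}g^{(i)}$, with each $g^{(i)}$ built from $(g_{ab})_{a,b\in[i]}$.

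Second, I would invoke Lemma \ref{lemmamany}. The identity $\partial_{\alpha}g_{ab}=0$ for $[\alpha]\neq[a]$ says $g^{(i)}_{ab}$ depends only on $s$ and on the $i$-th block coordinates $(x^{(i)}_{c})_{c\in[i]}$. The identity $\partial_{1}g_{ab}=2\xi_{a}\,g_{ab}$, together with the previously established fact that $\xi_{a}=(\lambda-\lambda_{a})/|\nabla f|$ depends only on $s$ and is common to all $a\in[i]$, is an ODE in $s$ whose solution factorizes as
\begin{equation*}
g^{(i)}_{ab}(s,x)=h_{i}(s)^{2}\,\tilde{g}^{(i)}_{ab}(x),\qquad h_{i}(s):=\exp\!\left(\int_{s_{0}}^{s}\xi_{a}(\tau)\,\mathrm{d}\tau\right),
\end{equation*}
where $\tilde{g}^{(i)}_{ab}(x):=g^{(i)}_{ab}(s_{0},x)/h_{i}(s_{0})^{2}$ is the metric on the leaf $N_{i}$. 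After absorbing constant $h_{i}$'s into the base and rescaling identifications of proportional warping functions (as in Remark \ref{grmnt_wrpngfnctn}), this yields
\begin{equation*}
g=\mathrm{d}s^{2}+\sum_{i=1}^{k}h_{i}(s)^{2}\,\tilde{g}^{(i)},
\end{equation*}
which is precisely the multiply warped product structure $I\times_{h_{1}}N_{1}^{r_{1}}\times\cdots\times_{h_{k}}N_{k}^{r_{k}}$.

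Third, for the Einstein property of each fiber $N_{i}$ with $r_{i}\ge2$, I would apply the multiply warped product Ricci formula in Lemma \ref{warped}(2). For lifted vector fields $V,W\in\mathcal{L}(N_{i})$, that formula gives
\begin{equation*}
\mathrm{Ric}_{N_{i}}(V,W)=\mathrm{Ric}(V,W)+\Phi_{i}(s)\,g(V,W),
\end{equation*}
where $\Phi_{i}(s)$ is a function of $s$ only (depending on $h_{i}$, $r_{i}$, and the other $h_{\ell}$'s, all of which depend only on $s$ since the base is one-dimensional). Since $V,W$ lie in the eigenspace $D_{\lambda_{i}}$, we have $\mathrm{Ric}(V,W)=\lambda_{i}(s)\,g(V,W)$, and $g(V,W)=h_{i}(s)^{2}\,\tilde{g}^{(i)}(V,W)$. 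Therefore $\mathrm{Ric}_{N_{i}}(V,W)=h_{i}(s)^{2}(\lambda_{i}(s)+\Phi_{i}(s))\,\tilde{g}^{(i)}(V,W)$; the left-hand side depends only on the fiber coordinates while the scalar factor depends only on $s$, so this factor must be a constant $c_{i}$. Hence $\mathrm{Ric}_{N_{i}}=c_{i}\,\tilde{g}^{(i)}$ and $N_{i}$ is Einstein whenever $r_{i}\ge2$. The main obstacle is really the first step --- justifying that the orthogonal integrable distributions admit a simultaneously adapted coordinate chart transported consistently along the $E_{1}$-flow --- but this follows from standard Frobenius-type arguments once we know eigenvalues and $\xi_{a}$'s depend only on $s$.
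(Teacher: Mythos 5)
Your proposal is correct and follows essentially the same route as the paper: both rest on Lemma \ref{lemmamany}, integrate the ODE $\partial_{1}g_{ab}=2\xi_{i}g_{ab}$ to factor $g_{ab}=h_{i}(s)^{2}\tilde{g}^{(i)}_{ab}(x)$, and then conclude the Einstein property by a separation-of-variables argument on $\mathrm{Ric}_{N_{i}}$. The only (immaterial) difference is in that last step, where you substitute $\mathrm{Ric}(V,W)=\lambda_{i}(s)\,g(V,W)$ directly, while the paper computes the same quantity as $(\lambda-f'h_{i}'/h_{i})g(V,W)$ from the soliton equation; by \eqref{eqXiRelFuncWarp} these coincide.
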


\begin{proof}
	Let $U \subset M_{\mathcal{A}} \cap \{ \nabla f \neq 0 \}$ be an open connected subset, and consider a system of local coordinates $(x_1 = s, x_2, x_3, \ldots, x_n)$ in $U$ as in Lemma \ref{lemmamany}, so that $U=I\times N_1^{r_1}\times\cdots\times N_{k}^{r_k}$, topologically. Let us fix $i\in\{1,\ldots,k\}$, $s_0 \in I$, $(x_{1}^{0}=s_0,x_{2}^{0},\ldots,x^{0}_{n})\in U$ and $a, b \in [i]$. \emph{Mutatis mutandis}, equation \eqref{37lemma36} from Lemma \ref{lemmamany} implies that 
	\[
	\partial_1g_{ab} = 2 \xi_i g_{ab},
	\]
	where we are using that $\xi_{i}=\xi_{a}$. Therefore, if we define the function $\tilde{h}_i: I \to \mathbb{R}$ by
	\[
	s \in I \mapsto \tilde{h}_i(s) := \mathrm{exp}\left(\int_{s_0}^s \xi_i(y) \ \mathrm{d}y \right)
	\]
	Then $\tilde{h}_i$ satisfies
	\begin{align*}
	    \partial_1\left(\tilde{h}_i^{-2} g_{ab} \right) = 0\ \ \ \ \text{and}\ \ \ \ \partial_\alpha\left(\tilde{h}_i^{-2} g_{ab} \right) = 0,
	\end{align*}
	so that $\tilde{h}^{-2}_i g_{ab}$ depends only on $(x_{b})_{b\in[a]}$. In particular, it is constant in $s$, i.e, 
	\[
	(\tilde{h}_i(s))^{-2} g_{ab}(s, x_2, \ldots, x_n) = (\tilde{h}_i(s_0))^{-2} g_{ab}(s_0, x_2, \ldots, x_n)
	\]
	or, equivalently,
	\begin{equation*}
		g_{ab}(s, x_2, \ldots, x_n) = h_{i}(s)^2g_{ab}(s_0, x_2, \ldots, x_n)
	\end{equation*}
	where $h_{i}(s) := \frac{\tilde{h}_i(s)}{\tilde{h}_i(s_0)}$. Without any loss of generality, we can assume $h_i(s_0)=1$. This gives $g_{ab}= h_{i}^2(g_{N_{i}})_{ab}$, where $g_{N_{i}}$ is a metric in $N_{i}^{r_{i}}$. In an entirely analogous manner, we obtain Riemannian metrics $g_{\alpha}$ on $N_{\alpha}$ for any $\alpha \notin [i]$. This proves that in $U$, $g$ can be written as the following (multiply) warped product
	\begin{align*}
		g = \ \mathrm{d}s^2 + h_i^2 g_i + \sum_{\alpha \notin [i]} h_{\alpha}^2 g_{\alpha}.
	\end{align*}
	
	Now we prove that the fibers $N_{i}^{r_{i}}$ of dimension $r_{i}\geq2$ must be Einstein. To see this, consider $V, W \in \mathcal{L}(N_{i})$ and notice that combining \eqref{fundeq} and the equations for the covariant derivative and the Ricci tensor, given in Lemma \ref{warped}, we have the following equality
	\begin{align*}
		\mathrm{Ric}_{N_{i}}(V,W)= \left(h_ih''_i + (r_i - 1)(h'_i)^2+ h_ih'_i \displaystyle \sum_{\alpha \notin [i]} r_{\alpha} \frac{h'_{\alpha}}{h_{\alpha}}+h_ih'_if'+\lambda h_i^2 \right) g_{N_{i}}(V, W).
	\end{align*}
	Now notice that the function multiplying $g_{N_{i}}$ depends only on $s$, while both $\mathrm{Ric}_{N_{i}}$ and $g_{N_{i}}$ depend only on $(x_2, \ldots, x_n)$. This shows that there is a constant $\mu_{i}$ so that $\mathrm{Ric}_{N_{i}}=\mu_{i}g_{N_{i}}$, as claimed. A similar argument shows that each $N_{\alpha}$ with $r_{\alpha}\geq2$ is Einstein.
\end{proof}

A straightforward consequence of the proof of Theorem \ref{decompwarp} gives the following important identity.
\begin{corollary} Assuming the hypothesis of Theorem \ref{decompwarp}, and the notation of Lemma \ref{nablaframe}, we have 
	\begin{equation}\label{eqXiRelFuncWarp}
		\frac{\lambda - \lambda_i}{f'} = \xi_i = \frac{h_{i}'}{h_i}.
	\end{equation}
\end{corollary}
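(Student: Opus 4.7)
The plan is to derive both equalities directly from material already in hand: the first from Lemma \ref{caochen} together with the definition of $\xi_i$ in Lemma \ref{nablaframe}, and the second from the explicit construction of the warping function $h_i$ carried out inside the proof of Theorem \ref{decompwarp}.

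For the first equality, I would begin by recalling that by item \eqref{cao3} of Lemma \ref{caochen} the arc-length parameter $s$ along the integral curves of $E_1 = \nabla f/|\nabla f|$ satisfies $E_1 = \nabla s = \partial_s$. Consequently, writing $f' := \partial_s f$,
\begin{equation*}
f' = E_1 f = \left\langle \nabla f, \tfrac{\nabla f}{|\nabla f|} \right\rangle = |\nabla f|.
\end{equation*}
Substituting this into the definition \eqref{step1} from Lemma \ref{nablaframe} gives
\begin{equation*}
\xi_i = \frac{\lambda - \lambda_i}{|\nabla f|} = \frac{\lambda - \lambda_i}{f'},
\end{equation*}
which is the first equality.

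For the second equality, I would simply revisit the explicit formula for the warping function produced in the proof of Theorem \ref{decompwarp}. There, after normalizing $h_i(s_0) = 1$, the warping function was obtained as
\begin{equation*}
h_i(s) = \exp\!\left( \int_{s_0}^{s} \xi_i(y) \, \mathrm{d}y \right).
\end{equation*}
Differentiating with respect to $s$ and using that the $\xi_i$ depend only on $s$ (as established in Section \ref{subsec_dep_s}) yields $h_i'(s) = \xi_i(s)\, h_i(s)$, i.e.\ $\xi_i = h_i'/h_i$.

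Since both equalities follow by unpacking definitions already established, I do not anticipate any genuine obstacle; the only care required is to make the identification $f' = |\nabla f|$ explicit and to cite the right formula for $h_i$ from the proof of Theorem \ref{decompwarp}. The statement is essentially a convenient repackaging of those constructions, which is why it is phrased as a corollary rather than a new result.
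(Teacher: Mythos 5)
Your proposal is correct and matches the paper's intent: the corollary is stated there as a direct consequence of the proof of Theorem \ref{decompwarp}, and your argument simply unpacks that, combining $f'=E_1f=|\nabla f|$ with the definition \eqref{step1} of $\xi_i$ for the first equality, and differentiating the explicit formula $h_i(s)=\exp\bigl(\int_{s_0}^{s}\xi_i(y)\,\mathrm{d}y\bigr)$ for the second. No gaps; this is essentially the same route the paper takes.
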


\section{Estimating the number of fibers}\label{number_fiber}

\hspace{.5cm}In this section, we prove that the number of fibers in the representation given in Theorem \ref{decompwarp} is at most two. More precisely, we prove that:
\begin{theorem}\label{decompwarpint}
Any nontrivial gradient Ricci soliton with harmonic Weyl curvature is locally a multiply warped product with a one-dimensional base and at most two fibers. Furthermore, the fibers of dimension at least two must be Einstein.
\end{theorem}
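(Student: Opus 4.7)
The plan is to build on Theorem \ref{decompwarp}, which already provides the local multiply warped product decomposition with Einstein fibers. It thus remains to bound the number of fibers by $k\leq 2$, and the strategy is to exhibit a single polynomial of degree at most two, not identically zero, whose roots include each $\xi_i = h_i'/h_i$.

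The first step is to record two scalar identities satisfied by every $\xi_i$. Evaluating $\mathrm{Rm}(E_1,V,E_1,V)$ for a unit $V\in TN_i$ in two different ways --- once via Lemma \ref{bari}, which yields $\lambda_1/(n-1)$, and once by direct computation from the connection formulas of Lemma \ref{warped}, which yields $-h_i''/h_i = -(\xi_i'+\xi_i^2)$ --- produces the Riccati equation
\begin{align*}
\xi_i' + \xi_i^2 \;=\; \psi(s), \qquad \psi := -\frac{\lambda_1}{n-1},
\end{align*}
valid for every $i$ with the \emph{same} right-hand side. Independently, dividing by $h_i^2$ the identity for the Einstein constant $\mu_i$ derived at the end of the proof of Theorem \ref{decompwarp} (using $\mathrm{Ric}_{N_i}=\mu_i g_{N_i}$) gives an identity of the form
\begin{align*}
\frac{\mu_i}{h_i^2} \;=\; \xi_i' + \xi_i\,P(s) + \lambda,
\end{align*}
for an explicit function $P(s)$ built from $f'$ and $\sum_\alpha r_\alpha \xi_\alpha$ (hence independent of $i$).

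The key step is then to differentiate the second identity in $s$. Since $\mu_i$ is a constant, $(\mu_i/h_i^2)' = -2\xi_i\,\mu_i/h_i^2$. Substituting the expression for $\mu_i/h_i^2$ on the left and eliminating $\xi_i'$ and $\xi_i''=-2\xi_i\xi_i'$ via the Riccati equation, the $\xi_i^3$ and $\psi\xi_i$ contributions cancel and one is left with the algebraic identity
\begin{align*}
Q(\xi_i) \;:=\; P\,\xi_i^2 + (P'+2\lambda)\,\xi_i + \psi\,P \;=\; 0, \qquad i=1,\ldots,k.
\end{align*}
Since the coefficients of $Q$ depend only on $s$, the values $\xi_1(s),\ldots,\xi_k(s)$ all lie in the root set of a single polynomial of degree at most two. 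Provided $Q$ is not identically zero, it has at most two distinct roots; combined with the nonproportionality convention of Remark \ref{grmnt_wrpngfnctn}, this yields $k\leq 2$, and the Einstein property of fibers of dimension at least two is already contained in Theorem \ref{decompwarp}.

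I expect the main obstacle to be verifying that $Q$ is not the zero polynomial. All three of its coefficients vanish simultaneously only when $P\equiv 0$ and $\lambda=0$ --- a degenerate steady configuration in which, substituting the Riccati equation into the second identity, one finds $\mu_i/h_i^2 = \psi - \xi_i^2$, forcing $\psi$ to be a constant and each $h_i$ to lie in the $2$-dimensional solution space of $h''=\psi h$ with prescribed ``energy'' $(h_i')^2 - \psi h_i^2 = -\mu_i$. In this narrow configuration one would need a separate direct argument --- exploiting the constraint $\sum_\alpha r_\alpha\xi_\alpha = \pm f'$ together with the explicit two-dimensional solution space of $h''=\psi h$ --- to rule out $k\geq 3$, completing the proof.
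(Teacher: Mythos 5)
Your strategy is essentially the paper's own: your $P$ plays the role of the function $B$ of \eqref{def_B_C} (indeed $B=\sum_{\ell}r_{\ell}\xi_{\ell}-f'$), your $\psi=-\lambda_1/(n-1)$ equals $C-\lambda$ by \eqref{excep} and \eqref{third38}, and your quadratic $Q$ is, up to one slip, the polynomial \eqref{poly_atmst2} of Lemma \ref{lem_ply_deg2_}. The slip: differentiating the Riccati equation gives $\xi_i''=\psi'-2\xi_i\xi_i'$, not $\xi_i''=-2\xi_i\xi_i'$, so the constant coefficient of $Q$ must be $\psi P+\psi'$, matching the term $(C-\lambda)B+C'$ in \eqref{poly_atmst2}. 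This is easily repaired and does not change the structure of the argument.

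The genuine gap is the step you explicitly defer: proving that $Q$ is not the zero polynomial. This is not a side issue; it is the substantive content of Lemma \ref{lem_ply_deg2_}, where the paper spends most of its effort ruling out the degenerate case $B\equiv0$, $\lambda=0$, using the expressions for $R$ and $\vert\mathrm{Ric}\vert^{2}$ in terms of the $\xi_{\ell}$, the formula \eqref{lplacescl} for $\Delta R$, Li's identity \eqref{lplacescl2}, and repeated differentiation, ending in the contradiction $f'=0$. Without nontriviality, "the $\xi_i$ are roots of a quadratic" yields no bound whatsoever, so as written your argument does not establish $k\le2$. For what it is worth, the route you sketch can in fact be completed, and quite quickly: in the degenerate case one has $\lambda=0$ and $P\equiv0$, i.e.\ $\sum_{\ell}r_{\ell}\xi_{\ell}=f'$ on the interval; differentiating this and using $\xi_{\ell}'=\psi-\xi_{\ell}^{2}$ together with $f''=\lambda+(n-1)\psi=(n-1)\psi$ (equation \eqref{third38}) gives $\sum_{\ell}r_{\ell}\xi_{\ell}^{2}=0$, hence every $\xi_{\ell}\equiv0$ and $f'\equiv0$ on the regular set, contradicting nontriviality --- an argument shorter than the paper's own. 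But either this or the paper's longer computation must actually be carried out; your proposal stops precisely where the decisive work of the proof begins.
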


As we will see below, the theorem above already implies that the Ricci tensor of $M$ has at most three distinct eigenvalues, which proves Theorem \ref{maxxnumeig-INTRO}. 

 To prove Theorem \ref{decompwarpint}, we will show that the quantities $h_{i}'/h_{i}$ satisfy a nonconstant polynomial of degree at most two. In order to construct this polynomial, we will use the following lemma proved by F. Li in \cite{feng}.

\begin{lemma}[Li, F.]\label{lema38}
	Let $(M^n,g,f,\lambda)$, $n\geq4$, be a gradient Ricci soliton with harmonic Weyl curvature and nonconstant $f$. Then the following equations hold
	\begin{align}
		&\xi'_{i}+\xi^{2}_{i}=-\frac{R'}{2(n-1)f'}\label{excep}\\
		&-f^{\prime} \xi_i+\lambda=-\xi_i^{\prime}-\xi_i \sum_{j=2}^n \xi_j+(r_i-1) \frac{\mu_i}{h_i^2}\label{sec38}\\
		&\lambda_1=-f^{\prime \prime}+\lambda=-(n-1)\left(\xi_i^{\prime}+\xi_i^2\right)\label{third38}
	\end{align}
\end{lemma}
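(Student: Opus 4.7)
The plan is to establish the three identities through three distinct mechanisms: (\ref{excep}) will come from the harmonic-Weyl identity of Lemma \ref{bari}; (\ref{third38}) will follow by combining (\ref{excep}) with the standard Hamilton-type conservation law; and (\ref{sec38}) will drop out of the Ricci formula of Lemma \ref{warped}, now that Theorem \ref{decompwarp} is available.

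For (\ref{excep}), I would fix $a\ge 2$ and apply Lemma \ref{bari} with $(X,Y,Z)=(E_a,E_a,E_1)$. Since $g(E_a,E_1)=0$ and $g(E_a,E_a)=1$, the right-hand side collapses to $-R'/(2(n-1))$, and the left-hand side equals $f'\,\mathrm{Rm}(E_1,E_a,E_a,E_1)$. The sectional curvature is then computed directly from the paper's curvature convention using $\nabla_{E_1}E_1=0$ (item 6 of Lemma \ref{caochen}), $\nabla_{E_a}E_1=\xi_a E_a$ (Lemma \ref{nablaframe}), and $[E_1,E_a]=\nabla_{E_1}E_a-\xi_a E_a$. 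The two potentially annoying terms $\langle\nabla_{E_1}E_a,E_a\rangle$ and $\langle\nabla_{\nabla_{E_1}E_a}E_1,E_a\rangle$ both vanish: the first by orthonormality of the eigenframe; the second because $\nabla_{E_1}E_a$ has neither $E_1$- nor $E_a$-component and $\nabla_{E_b}E_1=\xi_b E_b$ for $b\ge 2$. What remains gives $\mathrm{Rm}(E_1,E_a,E_a,E_1)=\xi_a^{\prime}+\xi_a^2$, from which (\ref{excep}) follows immediately.

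For (\ref{third38}), I would trace the soliton equation (\ref{fundeq}) against $(E_1,E_1)$: using $\nabla^2 f(E_1,E_1)=f''$ (again by $\nabla_{E_1}E_1=0$) this yields $\lambda_1=\lambda-f''$. Differentiating the standard conserved quantity $R+|\nabla f|^2-2\lambda f\equiv\mathrm{const}$ along $E_1$ produces $R'=2f'(\lambda-f'')=2f'\lambda_1$. Substituting this into (\ref{excep}) gives $\xi_i^{\prime}+\xi_i^{2}=-\lambda_1/(n-1)$, which is precisely (\ref{third38}).

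For (\ref{sec38}), I would invoke the Ricci formula of Lemma \ref{warped}(2) with $V,W\in\mathcal{L}(N_i)$. Since $B=I$ is one-dimensional, $\Delta_B h_i=h_i''$, $|\nabla_B h_i|^2=(h_i')^2$ and $g_B(\nabla_B h_i,\nabla_B h_\ell)=h_i'h_\ell'$. Using $h_i''/h_i=\xi_i^{\prime}+\xi_i^2$ and the identity $r_i\xi_i+\sum_{\ell\ne i}r_\ell\xi_\ell=\sum_{j=2}^n\xi_j$, the bracketed coefficient of $g(V,W)$ reduces to $\xi_i^{\prime}+\xi_i\sum_{j=2}^n\xi_j$. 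Evaluating on a unit $V\in D_{\lambda_i}$, and using the normalization $\mathrm{Ric}_{N_i}=(r_i-1)\mu_i g_{N_i}$, one obtains $\lambda_i=(r_i-1)\mu_i/h_i^2-\xi_i^{\prime}-\xi_i\sum_{j=2}^n\xi_j$. Substituting $\lambda_i=\lambda-f'\xi_i$ (from $\xi_i=(\lambda-\lambda_i)/f'$) gives (\ref{sec38}).

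The principal obstacle lies in Step 1: carefully tracking the paper's nonstandard sign convention in $\mathrm{Rm}$ and justifying that the two candidate error terms in the sectional curvature computation actually vanish. Once that computation is done, equations (\ref{third38}) and (\ref{sec38}) follow with only bookkeeping, by applying respectively the conservation law and Lemma \ref{warped}.
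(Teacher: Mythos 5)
Your proof is correct, and it is worth noting that the paper itself offers no proof of this lemma at all: it is stated as a quotation of F.~Li's results in \cite{feng}, so your argument supplies a derivation where the paper only cites. All three steps check out. For \eqref{excep}, the evaluation of Lemma \ref{bari} at $(X,Y,Z)=(E_a,E_a,E_1)$ does give $f'\,\mathrm{Rm}(E_1,E_a,E_a,E_1)=-R'/(2(n-1))$, and your curvature computation is sound: $\nabla_{E_1}E_a$ is orthogonal to both $E_1$ and $E_a$, so writing $\nabla_{E_1}E_a=\sum_{b\neq a}c_bE_b$ and using $\nabla_{E_b}E_1=\xi_bE_b$ kills the two error terms you flag, leaving $\mathrm{Rm}(E_1,E_a,E_a,E_1)=\xi_a'+\xi_a^2$ in the paper's sign convention. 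For \eqref{third38}, the identity $\lambda_1=\lambda-f''$ is just item \eqref{cao4} of Lemma \ref{caochen}, and $R'=2\lambda_1f'$ follows equally well from Hamilton's identity $\nabla R=2\,\mathrm{Ric}(\nabla f,\cdot)$ as from the conserved quantity you invoke; combined with \eqref{excep} this gives the third equation. For \eqref{sec38}, using Lemma \ref{warped} over the one-dimensional base together with $h_i''/h_i=\xi_i'+\xi_i^2$ and $\sum_{j\ge2}\xi_j=\sum_\ell r_\ell\xi_\ell$ is exactly the right bookkeeping, and there is no circularity since Theorem \ref{decompwarp} is established before this lemma is used. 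One small point to make explicit: you correctly adopt the normalization $\mathrm{Ric}_{N_i}=(r_i-1)\mu_i\,g_{N_i}$, which is what the statement's term $(r_i-1)\mu_i/h_i^2$ requires, whereas the proof of Theorem \ref{decompwarp} writes $\mathrm{Ric}_{N_i}=\mu_i g_{N_i}$; the two $\mu_i$'s differ by the factor $r_i-1$, a discrepancy in the paper's notation rather than in your argument, but worth flagging so a reader does not conflate them.
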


It will be convenient to introduce the functions
    \begin{align}\label{def_B_C}
        B=\frac{(n-1)\lambda-R+\lambda_{1}-(f')^2}{f'}\ \ \ \text{and}\ \ \ C=-\frac{R'}{2(n-1)f'}+\lambda.
    \end{align}
With this notation, \eqref{excep} can be written as $\xi_{i}'=-\xi_{i}^{2}+C-\lambda$. Combining this equation, the first equality of \eqref{eqXiRelFuncWarp} and \eqref{sec38}, we obtain

    \begin{align}\label{first_scnd_Dg}
        \xi^{2}_{i}-B\xi_i-C=-(r_i-1) \frac{\mu_i}{h_i^2}.
    \end{align}
This equation allows us to give an alternative proof of \cite[Proposition 3.4]{kim1}, which establishes an estimate to the number of distinct eigenvalues of the Ricci tensor of a gradient Ricci soliton $M^4$ with harmonic Weyl tensor.

\begin{proposition}\label{prop_n=4}
    If $n=4$, then $M$ has at most two fibers in the representation given by Theorem \ref{decompwarp}. Equivalently, the Ricci tensor of $M^4$ has at most three distinct eigenvalues.
\end{proposition}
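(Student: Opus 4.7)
The plan is to combine the dimension count for $n=4$ with the quadratic relation \eqref{first_scnd_Dg}. By Theorem \ref{decompwarp}, on any connected open subset $U$ of the regular set $M_{\mathcal{A}}\cap\{\nabla f\neq 0\}$, the metric is a multiply warped product $I\times_{h_1} N_1^{r_1}\times\cdots\times_{h_k} N_k^{r_k}$, with the conventions of Remark \ref{grmnt_wrpngfnctn} enforced. Since $\dim M = 4$ and the base is one-dimensional, the fiber dimensions satisfy $r_1+\cdots+r_k=3$ with each $r_i\geq 1$. Hence $k\in\{1,2,3\}$, and the proposition amounts to excluding the case $k=3$.

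Suppose for contradiction that $k=3$. Then necessarily $r_1=r_2=r_3=1$, so the right-hand side of \eqref{first_scnd_Dg} vanishes identically for every index, yielding
\[
\xi_1^2-B\xi_1-C \;=\; \xi_2^2-B\xi_2-C \;=\; \xi_3^2-B\xi_3-C \;=\; 0
\]
at each point of $U$. Thus $\xi_1,\xi_2,\xi_3$ would be three simultaneous roots of the monic polynomial $P(x)=x^2-Bx-C$, which has degree exactly two and therefore admits at most two distinct roots pointwise.

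The third step is to certify that these three values are pairwise distinct. By the construction in Lemma \ref{lemmamany} and Theorem \ref{decompwarp}, the three fibers arise from the integral manifolds of three distinct Ricci eigenspaces, so the associated eigenvalues $\lambda_{i}$ are pairwise distinct on $U$. Since we are on the regular set, $f'\neq 0$, and identity \eqref{eqXiRelFuncWarp} gives $\xi_i=(\lambda-\lambda_i)/f'$; hence distinctness of the $\lambda_i$ transfers immediately to distinctness of the $\xi_i$. (Equivalently, Remark \ref{grmnt_wrpngfnctn} rules out proportional warping functions.) This produces three distinct roots of a monic quadratic, a contradiction. Therefore $k\leq 2$.

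For the translation to the eigenvalue count, I would simply observe that each fiber $N_i$ contributes a single Ricci eigenvalue $\lambda_i$ (with multiplicity $r_i$), while $E_1=\nabla f/|\nabla f|$ contributes $\lambda_1$, so the number of distinct eigenvalues of $\mathrm{Ric}$ is at most $k+1\leq 3$. The only genuine obstacle is the pointwise distinctness of $\xi_1,\xi_2,\xi_3$; but as indicated, this is an immediate consequence of \eqref{eqXiRelFuncWarp} and the normalization in Remark \ref{grmnt_wrpngfnctn}, and requires no further machinery beyond what has already been set up.
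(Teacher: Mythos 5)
Your proposal is correct and follows essentially the same route as the paper: for $n=4$ and $k=3$ all fibers are one-dimensional, so \eqref{first_scnd_Dg} forces the three pairwise distinct $\xi_i$ to be roots of the monic quadratic $x^2-Bx-C$, a contradiction. The extra details you supply (the dimension count $r_1+r_2+r_3=3$ and the derivation of pointwise distinctness of the $\xi_i$ from \eqref{eqXiRelFuncWarp}) are consistent with, and slightly more explicit than, the paper's argument.
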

\begin{proof}
    Assume by contradiction that there are exactly three distinct fibers in the representation given by Theorem \ref{decompwarp}. Then, $\xi_{1},\ \xi_{2}$ and $\xi_{3}$ are pairwise distinct. Furthermore, each fiber must have dimension $1$, that is, $r_{1}=r_{2}=r_{3}=1$. This last information means that \eqref{first_scnd_Dg} simply becomes $\xi^{2}_{i}-B\xi_i-C=0$, for $i\in\{1,2,3\}$. But this implies the existence of at most two distinct $\xi_{i}$, which is a contradiction.
\end{proof}

In what follows, we extend this argument to higher dimensions. Namely, we construct a nonzero polynomial of degree at most two, which has as roots the functions $\xi_{1},\ldots,\xi_{k}$.

\begin{lemma}\label{lem_ply_deg2_}
    The functions $\xi_{1},\ldots,\xi_{k}$ satisfy the following equation
    \begin{align}\label{poly_atmst2}
        B\xi_{i}^2+(B'+2\lambda)\xi_{i}+(C-\lambda)B+C'=0.
    \end{align}
    Furthermore, the polynomial in $\xi_{i}$ defined by the left-hand side of \eqref{poly_atmst2} is nontrivial.
\end{lemma}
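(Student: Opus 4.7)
The plan is to first derive \eqref{poly_atmst2} by differentiating \eqref{first_scnd_Dg} along the $s$-direction, and then to prove nontriviality by ruling out the simultaneous vanishing of all three coefficients.

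For the derivation, I would differentiate \eqref{first_scnd_Dg} with respect to $s$, using $(h_i^{-2})' = -2\xi_i h_i^{-2}$ (which follows from \eqref{eqXiRelFuncWarp}) to obtain
\begin{align*}
(2\xi_i - B)\xi_i' - B'\xi_i - C' = \frac{2\xi_i(r_i-1)\mu_i}{h_i^2}.
\end{align*}
The index $i$ now appears on the right only through $(r_i-1)\mu_i/h_i^2$, which I would eliminate by reapplying \eqref{first_scnd_Dg} as $(r_i-1)\mu_i/h_i^2 = -\xi_i^2 + B\xi_i + C$. Substituting $\xi_i' = -\xi_i^2 + C - \lambda$ (from \eqref{excep} together with the definition of $C$ in \eqref{def_B_C}) on the left, expanding, and canceling the cubic $-2\xi_i^3$ and the linear $2C\xi_i$ terms, the $\mu_i$- and $r_i$-dependence drops out, and precisely \eqref{poly_atmst2} emerges.

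For the nontriviality, I would argue by contradiction. If the polynomial vanishes identically then $B \equiv 0$ (hence $B' \equiv 0$), the middle coefficient forces $\lambda = 0$, and then the constant term forces $C' \equiv 0$. To reach a contradiction in this degenerate steady case, I would invoke Hamilton's identity $R + |\nabla f|^2 = 2\lambda f + \mathrm{const}$ to conclude $R + (f')^2$ is constant; combined with $B \equiv 0$ rewritten as $\lambda_1 = R + (f')^2$, this shows $\lambda_1$ is constant. Substituting $(r_i-1)\mu_i/h_i^2 = -\xi_i^2 + C$ and $\xi_i' = -\xi_i^2 + C$ into \eqref{sec38} then yields $\xi_i\bigl(f' - \sum_{j=2}^n \xi_j\bigr) = 0$. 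Since $n\geq 4$ and all warping functions are nonconstant by Remark \ref{grmnt_wrpngfnctn}, at least one $\xi_i\neq 0$, so $f' = \sum_{i=1}^k r_i\xi_i$. Differentiating this relation and using $f'' = -\lambda_1 = (n-1)C$ (from item \eqref{cao4} of Lemma \ref{caochen} and \eqref{third38}) produces $\sum_{i=1}^k r_i\xi_i^2 = 0$, forcing every $\xi_i = 0$ and yielding the contradiction.

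The main obstacle I anticipate is this nontriviality argument in the steady case: the purely local identities of Lemma \ref{lema38} leave enough slack for all three coefficients to vanish simultaneously, and closing this gap seems to require the global Hamilton identity together with the summed form of \eqref{sec38}, as opposed to the direct one-line conclusion available when $B\not\equiv 0$ or $\lambda\neq 0$.
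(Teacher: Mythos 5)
Your proposal is correct, and it splits naturally into two halves of different character. The derivation of \eqref{poly_atmst2} is essentially the paper's own argument verbatim: differentiate \eqref{first_scnd_Dg}, eliminate $(r_i-1)\mu_i/h_i^2$ by re-substituting \eqref{first_scnd_Dg}, replace $\xi_i'$ via \eqref{excep} written as $\xi_i'=-\xi_i^2+C-\lambda$, and watch the cubic and the $2C\xi_i$ terms cancel. The nontriviality argument, however, is genuinely different. The paper, after reducing triviality to $B\equiv0$, $\lambda=0$, $C\equiv c_0$, runs a comparatively heavy computation: it derives two competing expressions for $\vert\mathrm{Ric}\vert^2$ using Kim's formula \eqref{lplacescl} for $\Delta R$ and Li's identity \eqref{lplacescl2}, combines them, differentiates the resulting relation twice to force $c_0=0$, and only then concludes $R'=0$, $\vert\mathrm{Ric}\vert^2=0$, $R=0$ and finally $f'=0$ from $B=0$. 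You instead stay entirely within the ODE system of Lemma \ref{lema38}: from \eqref{sec38} with $B=0$, $\lambda=0$ you extract $f'=\sum_i r_i\xi_i$, differentiate once, and use $f''=-\lambda_1=(n-1)C$ together with $\xi_i'=-\xi_i^2+C$ to get $\sum_i r_i\xi_i^2=0$, a contradiction. This avoids \eqref{lplacescl}, \eqref{lplacescl2} and the second-derivative bookkeeping altogether, so your route is shorter and more elementary. Two small remarks for the write-up: the appeal to Hamilton's identity is actually superfluous, since the constancy of $\lambda_1$ is never used afterwards (only the pointwise identities $f''=-\lambda_1=(n-1)C$ enter); and the step ``at least one $\xi_i\neq0$, hence $f'=\sum_j\xi_j$'' should be localized, since nonconstancy of $h_i$ only gives $\xi_i\neq0$ on some open subinterval --- restrict the argument there, or simply observe that $\sum_{j\geq2}\xi_j=\frac{(n-1)\lambda-R+\lambda_1}{f'}=B+f'$, so $f'=\sum_j\xi_j$ holds identically once $B\equiv0$, which also quietly covers the degenerate possibility that every $\xi_i$ vanishes at the chosen point.
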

\begin{proof}
    Our first step is to eliminate $h_{i}$ from \eqref{first_scnd_Dg}. In order to do this, consider the derivative of this equation, and use the second equality in \eqref{eqXiRelFuncWarp}, to get
    \begin{align*}
        2\xi_{i}\xi'_{i}-B'\xi_i-B\xi_i'-C'=2(r_i-1) \frac{\mu_i}{h_i^2}\xi_{i}=-2(\xi^{2}_{i}-B\xi_i-C)\xi_{i}
    \end{align*}
    Using $\xi_{i}'=-\xi_{i}^{2}+C-\lambda$, we obtain
    \begin{align*}
        2\xi_{i}(-\xi^{2}_{i}+C-\lambda)-B'\xi_i-B(-\xi^{2}_{i}+C-\lambda)-C'=-2(\xi^{2}_{i}-B\xi_i-C)\xi_{i},
    \end{align*}
    which simplifies to \eqref{poly_atmst2}.

    Now, assume that the polynomial defined by the left-hand side of \eqref{poly_atmst2} is trivial. This happens if and only if $B(s)=\lambda=0$, $\forall s\in I$. In what follows, we will show that the vanishing of $B$ and $\lambda$ simultaneously implies that $f$ is constant, which is a contradiction. 
    
    From \eqref{poly_atmst2} and $B=\lambda=0$, we have immediately that $C'=0$. Then, there exists $c_{0}\in\mathbb{R}$ so that
    \begin{align}
        &R'=-2(n-1)c_{0}f',\label{frst_cnsqnc}\\
        &\xi'_{i}=-\xi^{2}_{i}+c_0,\label{excep'}\\
        &f''=-\lambda_1=(n-1)c_{0},\label{third38'}\\
        &\lambda_i=-f'\xi_i\label{eqXiRelFuncWarp''},
    \end{align}
    where these equalities follow from the definition of $C$, \eqref{excep}, \eqref{third38} and \eqref{eqXiRelFuncWarp}, respectively. On the other hand, using $R=\lambda_{1}+\displaystyle\sum_{\ell=1}^{k}r_{\ell}\lambda_{\ell}$, $\vert \mathrm{Ric}\vert^2=\lambda_{1}^{2}+\displaystyle\sum_{\ell=1}^{k}r_{\ell}\lambda_{\ell}^{2}$, \eqref{third38'} and \eqref{eqXiRelFuncWarp''}, we get
    \begin{align}\label{scl_ric}
        R=-(n-1)c_{0}-f'\displaystyle\sum_{\ell=1}^{k}r_{\ell}\xi_{\ell}\ \ \ \ \text{and}\ \ \ \ \vert \mathrm{Ric}\vert^2=(n-1)^{2}c_{0}^{2}+(f')^2\displaystyle\sum_{\ell=1}^{k}r_{\ell}\xi_{\ell}^{2}.
    \end{align}
    Now, this expression for $R$ implies that
    \begin{align*}
        R'&=-f''\displaystyle\sum_{\ell=1}^{k}r_{\ell}\xi_{\ell}-f'\displaystyle\sum_{\ell=1}^{k}r_{\ell}\xi_{\ell}'=-(n-1)c_{0}\displaystyle\sum_{\ell=1}^{k}r_{\ell}\xi_{\ell}+f'\displaystyle\sum_{\ell=1}^{k}r_{\ell}\xi^{2}_{i}-f'\displaystyle\sum_{\ell=1}^{k}r_{\ell}c_0\\ 
        &=(n-1)c_{0}R\frac{1}{f'}+\vert \mathrm{Ric}\vert^2\frac{1}{f'}-f'(n-1)c_0
\end{align*}
    where we have used \eqref{excep'}, \eqref{third38'}, \eqref{eqXiRelFuncWarp''} and \eqref{scl_ric}. As a consequence,
    \begin{align}\label{eq_frst_contr}
        \vert \mathrm{Ric}\vert^2=(n-1)c_0(f')^2-(n-1)c_{0}R+f'R'.
\end{align}
On the other hand, it follows from \cite{kim3} that
	\begin{align}\label{lplacescl}
            \begin{split}
    		\Delta R&=R''-R'\displaystyle\left(\frac{(n-1)\lambda-R+\lambda_{1}}{f'}\right),
            \end{split}
	\end{align}
    and from equation $(7.1)$ of \cite[Lemma 21]{feng} it follows that
    	\begin{align}\label{lplacescl2}
		\frac{1}{2}\Delta R-\frac{1}{2}f'R'=\lambda R-|\text{Ric}|^2 
	\end{align}
    Now, once we combine \eqref{lplacescl}, \eqref{lplacescl2} and \eqref{third38'}, we obtain
\begin{align*}
    R''+R'\left(\frac{R+(n-1)c_{0}}{f'}\right)-f'R'=-2\vert \mathrm{Ric}\vert^2
\end{align*}
Proceeding, identity \eqref{frst_cnsqnc} allow to rewrite the equation above as
\begin{align}\label{eq_scnd_contr}
    2\vert \mathrm{Ric}\vert^2=4(n-1)^2c_{0}^2+2(n-1)c_{0}R+f'R',
\end{align}
where we have also combined \eqref{frst_cnsqnc} and \eqref{third38'} to get $R''=-2(n-1)^2c_{0}^2$. Putting equations \eqref{eq_frst_contr} and \eqref{eq_scnd_contr} together, a simple computation delivers that
\begin{align*}
    4(n-1)c_{0}R-2(n-1)c_{0}(f')^2-f'R'+4(n-1)^2c_{0}^{2}=0.
\end{align*}
Computing the second derivative of the equation above, and then using $R''=-2(n-1)^2c_{0}^2$, \eqref{frst_cnsqnc} and \eqref{third38'}, we conclude that $(n-1)^3c_{0}^3=0$, which can happen only if $c_{0}=0$. In this case, \eqref{frst_cnsqnc} implies $R'=0$, and inserting these last facts in \eqref{eq_frst_contr}, we obtain $\vert \mathrm{Ric}\vert^2=0$. In particular, we get $R=0$. Putting all this together, we deduce from \eqref{def_B_C} that $f'=0$, which is a contradiction.
\end{proof}

\begin{proof}[{\bf Proof of Theorem \ref{decompwarpint}}]
    From Theorem \ref{decompwarp}, $M$ is locally a multiply warped product with a one-dimensional base and $k$ fibers, counted according to Remark \ref{grmnt_wrpngfnctn}. From Lemma \ref{lem_ply_deg2_}, $\xi_{i}=h'_{i}/h_{i}$ is a root of \eqref{poly_atmst2}. Since $f$ is not constant, there are at most two distinct $\xi_{i}$. Equivalently, there are at most two fibers in the local representation given by Theorem \ref{decompwarp}.
\end{proof}

\begin{proof}[{\bf Proof of Theorem \ref{maxxnumeig-INTRO}}]
    Assume by contradiction that the Ricci tensor of $M$ has at least $4$ distinct eigenvalues in an open connected set $U\subset M$, and let $\lambda_{0},\ \lambda_{1},\ \lambda_{2}$ and $\lambda_{3}$ be three of them, pairwise distinct. Notice that $f$ is not constant in $U$. On the other hand, using Theorem \ref{decompwarp} and Theorem \ref{decompwarpint}, and shrinking $U$ if necessary, we can take it isometric to a multiply warped product with a one-dimensional base and at most two fibers. Observe that by identity \eqref{eqXiRelFuncWarp}, we must have $\xi_{1}$, $\xi_{2}$ and $\xi_{3}$, pairwise distinct, which is a contradiction. This and the density of the set of regular values of $f$ in $M$ prove the result.
\end{proof}

\section{Acknowledgments}
V. Borges was partially supported by Capes Finance Code 001, and he thanks the Mathematics Department of Universidade de Brasília, where this work was conducted. M. A. R. M. Horácio was supported by Capes Finance Code 001. J. P. dos Santos was partially supported by CNPq 313200/2025-4 and FAPDF 00193-00001678/2024-39.

\end{document}